\begin{document}

\title{On Rank Driven Dynamical Systems
}


\author{J.J.P. Veerman         \and
        F.J. Prieto
}

\institute{J.J.P. Veerman \at
              Fariborz Maseeh Dept of Math and Stat\\ Portland State Univ.\\ Portland (OR), USA \\
              \email{veerman@pdx.edu}             \\
           \and
           F.J. Prieto \at
              Dept of Statistics \\ Univ Carlos III \\ Madrid, Spain \\
              \email{franciscojavier.prieto@uc3m.es}
}

\date{Received: date / Accepted: date}

\maketitle

\begin{abstract}
We investigate a class of models related to the Bak-Sneppen model, initially proposed to study evolution. The BS model is extremely simple and yet captures some forms of ``complex behavior'' such as self-organized criticality that is often observed in physical and biological systems.

In this model, random fitnesses in $[0,1]$ are associated to agents located at the vertices of a graph $G$. Their fitnesses are ranked from worst (0) to best (1). At every time-step the agent with the worst fitness and some others \emph{with a priori given rank probabilities} are replaced by new agents with random fitnesses. We consider two cases: The \emph{exogenous case} where the new fitnesses are taken from an a priori fixed distribution, and the \emph{endogenous case} where the new fitnesses are taken from the current distribution as it evolves.

We approximate the dynamics by making a simplifying independence assumption. We use Order Statistics and Dynamical Systems to define a \emph{rank-driven dynamical system} that approximates the evolution of the \emph{distribution} of the fitnesses in these rank-driven models, as well as in the Bak-Sneppen model. For this simplified model we can find the limiting marginal distribution as a function of the initial conditions. Agreement with experimental results of the BS model is excellent.

\keywords{Dynamical systems \and Order statistics \and Asymptotic approximations}
\end{abstract}

\section{Introduction}
\label{chap:introduction}

The publication of the paper by Bak and Sneppen \cite{BS} gave rise to a significant number of contributions in the literature both on evolution modeling and in related fields, regarding the analysis and application of this model, and other similar models. This interest has been explained on the basis of the simplicity of the model and its ability to capture some forms of complex behavior such as ``self-organized criticality'', \cite{Jen}, a property that is often observed in physical and biological systems. These are of interest in the modeling of evolution processes but also of other processes. In particular, the model shows power-law behavior in the distribution of some of its performance measures, while not requiring the fine-tuning of any parameters to achieve this criticality, thus it is considered to be ``self-organized''.

The study of the theoretical properties of the model, and in general of related models presenting this self-organized criticality property, has been based mostly on the analysis of ``avalanches'' associated to its evolution. These avalanches are defined as epochs in the process between times when the fitnesses of all agents are above a certain threshold. In particular, in Paczuski et al.\ \cite{PMB} some results were obtained on the evolution of the gap between avalanches and the span of an avalanche, for a class of self-organized models (``extremal'' models) that included Bak-Sneppen.

In spite of the simplicity of the Bak-Sneppen (BS) model and the significant literature devoted to its study, its properties have proven to be quite difficult to analyze in detail, and many of them are known mainly from simulation results. In Meester and Znamenski \cite{MZ2} and \cite{MZ3}, and Meester et al.\ \cite{MGW}, for example, the non-triviality of the marginal distributions was proved and some parameters associated to the limiting distributions were studied, based on a characterization of the behavior of avalanches. Other results establishing different properties of the limiting behavior of these avalanches in the BS model can be found in Maslov \cite{Mas} and Tabelow \cite{Tab}.

A more detailed understanding of these models can be obtained from the consideration of simplified versions, which would still preserve some of the interesting characteristics of BS while being much more amenable to analysis. One of the first modifications to be considered was the ``mean-field'' variant of BS, that is, the case when the replaced agents were chosen as the one with the minimum fitness and a fixed number of other agents selected at random. This model was studied in de Boer et al.\ \cite{BDF}, where it was shown that power laws were still obtained for the distribution of avalanche durations. Meester and Znamenski \cite{MZ1} present another model where fitnesses were required to take the values either 0 or 1, and show that it exhibits non-trivial behavior.

Another promising and more complex approach to approximate the models of interest has been proposed by Grinfeld et al.\ \cite{GKW1}. They introduced and adapted so-called ``rank-driven'' processes, that is, processes where the agents to be replaced are defined in terms of the order statistics of the current population distribution. These processes have been described in detail, and their limiting properties have been characterized for some cases, in Grinfeld et al.\ \cite{GKW2}. In particular, they showed that if the replacements were chosen from a uniform distribution, the limiting distribution for appropriate rank-driven processes had a structure similar to that observed in simulations of a BS process. We are interested in considering situations that extend these cases to more general settings.

There is no perfect equivalence between a Bak-Sneppen process and rank-driven processes. Nevertheless, the results available show that the limiting marginal distributions of these processes provide reasonable approximations to study the limiting behavior of the Bak-Sneppen process and some of its extensions. The usefulness of these processes as approximations for other more complex ones, as well as their flexibility to generate different types of limiting behaviors, provide our main motivation to study this class of processes.

We introduce a simplifying hypothesis, namely that our observations are statistically independent. We then use the resulting approximate theory to characterize the limiting distribution of rank-driven processes for any continuous replacement distributions, in a manner that is simple and computationally tractable. The result is a new type of dimensional dynamical system acting on the space of cumulative distribution functions, and whose solutions are accessible through analysis. We will denote these systems by \emph{rank-driven dynamical systems}.

These same procedures are also used to analyze the behavior of the system in a case that has not been previously considered in the literature, namely when the replacements are taken from the same population (the ``endogenous'' case). Also here the limiting measures are easily computable. An interesting dichotomy occurs. In the endogenous case the limiting measure is singular with respect to Lebesgue. In the case discussed above where the replacements are taken from an unchanging distribution --- we refer to this situation as the \emph{exogenous} case, the limiting measures for many reasonable initial conditions are absolutely continuous, though in some cases the underlying exact solution may still be singular (see Section \ref{chap:discussion}). Thus, endogenous evolution may optimize fitness if the conditions are right, but all agents tend to become identical (no diversity). Exogenous evolution on the other hand can also improve fitness, but often diversity is retained: a nontrivial \emph{range} of fitnesses is preserved.

The paper is organized as follows: In Section \ref{chap:interlude} we present the rank-driven processes of interest, and we enumerate some well-known results on the distribution of order statistics. Section \ref{chap:zero} characterizes the temporal evolution of the process under endogenous and exogenous replacement schemes. It also introduces and justifies the main characterization results for both of these cases. In Section \ref{chap:two} we give some examples of these dynamics and show that we get behaviors similar to those observed under the BS model. Section \ref{chap:asympt} presents a precise characterization of the limiting measure (in the limit for many agents) for exogenous replacement schemes similar to, but much more general then, the one used in the BS model. In the last section we compare our approximation to the known exact solution for a particular system.

\section{Rank-driven processes and order statistics}
\label{chap:interlude}

In the Bak-Sneppen model, the lowest value plus its two immediate neighbors in a given graph $G$ are replaced by random values from a known distribution $N$. This procedure introduces a ``geometric'' dependence that greatly complicates its analysis. Simulation results indicate that after many iterations the neighbors of the worst performer consist of (on average) one very bad performer and one ($N$-) average performer. Indeed, it appears very hard to prove these correlations (see for example \cite{GKW1}). It seems therefore reasonable to approximate the BS model by a rank-driven model whose parameters can be chosen to obtain a model that is much more tractable and provides a close approximation.

We now introduce a (simplified) formal model for this system, and its associated notation. Let $R$ be a distribution function for a distribution supported on $[0,1]$. Let ${\cal G} = {\cal G} (R; n) = \{G_1, \ldots , G_n\}$ be a sample of $n$ (almost surely distinct) random variables with marginal distribution $R$, i.e., $P[G_i \leq x] = R(x)$ for each $i$. Denote the corresponding order statistics $G_{(1)} \leq \cdots \leq G_{(n)}$, and the marginal distribution function of the $i$-th order statistic by $\Theta_{i:n}(R) (x) = P[G_{(i)} \leq x]$, where the notation $\Theta_{i:n} (R)$ is motivated by our interest in the study of the properties of $R$.

\begin{lemma}
Consider the $k$-out-of-$n$ order-statistics $G_{(k)}$ from a set of identically distributed observations. Their distributions $\Theta_{k:n} (R)$ satisfy:
\begin{equation*}
 R = \frac1n \sum_{i=1}^n\, \Theta_{i:n} (R) .
\end{equation*}
 \label{lem:order-stats}
\end{lemma}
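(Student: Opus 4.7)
The plan is to exploit the tautology that the order statistics are just a reordering of the original sample, so summing indicator functions over either list gives the same count. Concretely, for any fixed $x \in [0,1]$ the number of sample points below $x$ can be expressed in two ways:
\[
\sum_{i=1}^n \mathbf{1}[G_i \le x] \;=\; \#\{i : G_i \le x\} \;=\; \sum_{i=1}^n \mathbf{1}[G_{(i)} \le x],
\]
since $(G_{(1)}, \ldots, G_{(n)})$ is merely a permutation of $(G_1, \ldots, G_n)$ (almost surely, by distinctness).

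Taking expectations on both sides and using linearity, the left side becomes $\sum_{i=1}^n P[G_i \le x] = n R(x)$ because each $G_i$ has marginal distribution $R$, while the right side becomes $\sum_{i=1}^n P[G_{(i)} \le x] = \sum_{i=1}^n \Theta_{i:n}(R)(x)$ by definition of $\Theta_{i:n}(R)$. Dividing by $n$ yields the claimed identity. An equivalent probabilistic phrasing I might give is: if $I$ is drawn uniformly from $\{1,\ldots,n\}$ independently of the sample, then $G_I$ is distributed as $R$ while $G_{(I)}$ is distributed as the mixture $\tfrac{1}{n}\sum_i \Theta_{i:n}(R)$; but these two random variables have the same law since choosing an index uniformly and then reading off the value is insensitive to whether the list is sorted first.

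There is no real obstacle here; the only thing to be slightly careful about is that the identification of the two multisets $\{G_i\}$ and $\{G_{(i)}\}$ requires distinctness (or at least equality as multisets, which holds trivially from the definition of order statistics). The result is purely combinatorial/measure-theoretic and does not use any properties of $R$ beyond the fact that the $G_i$ are identically distributed — indeed independence is not even needed for the argument above.
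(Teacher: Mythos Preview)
Your proof is correct and follows essentially the same approach as the paper: both use the indicator identity $\sum_i \mathbf{1}[G_i \le x] = \sum_i \mathbf{1}[G_{(i)} \le x]$ and then take expectations. Your write-up is simply more detailed, and your additional remark that independence is unnecessary is accurate and consistent with the paper's hypothesis.
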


\begin{proof}
It holds that
\[
  \sum_{i=1}^n 1\{{G_{(i)} \leq x}\} = \sum_{i=1}^n 1\{{G_i \leq x}\},
\]
and taking expectations,
\[
  \frac1n \sum_{i=1}^n \Theta_{i:n} (R) (x) = R(x) .
\]
\end{proof}
Let ${\cal E}(x) = {\cal E}(G)(x)$ denote the empirical distribution of $G$:
\[
   {\cal E} (x) = \frac1n \sum_{i=1}^n 1\{{G_{i} \leq x}\} = \frac1n \sum_{i=1}^n 1\{{G_{(i)} \leq x}\} = \frac1n \max \{ i : G_{(i)} \leq x \}
\]
Then,
\[
   {\mathbb E} [{\cal E} (x) ] = R(x) ,
\]
and $R$ is also the one-dimensional marginal distribution of a randomly chosen member of ${\cal G}$. The characterization of this distribution is the object of interest of this paper.

Assume now a random index $I$, uniformly distributed on $\{ 1,\ldots,N \}$. It holds that
\[
   {\mathbb E} [{\mathbb P} [ G_{(I)} \leq x] ] = \frac1n \sum_{i=1}^n \Theta_{i:n} (R) (x) = R(x) .
\]

On the model described above, we will conduct the following stochastic ``update'': Let $\xi_1,\ldots , \xi_n$ and $Y_1, \ldots , Y_n$ be independent (and independent of ${\cal G}$), with ${\cal P}[\xi_i = 1] = 1 - {\cal P}[\xi_i = 0] = \alpha_i \in [0,1]$, and ${\cal P}[Y_i \leq x] = N(x)$. Construct ${\cal G}'$ by, for each $i$, selecting $Y_i$ if $\xi_i =1$ or $G_{(i)}$ if $\xi_i = 0$:
\[
  {\cal G}' = \{ G_{(i)} : \xi_i = 0\} \cup \{ Y_i : \xi_i = 1 \} .
\]
Then the ``updated'' empirical distribution corresponding to ${\cal G}'$ is
\[
  {\cal E}'(x) = {\cal E} ({\cal G}') (x) = \frac1n \sum_{i=1}^n \left( \xi_i 1\{ Y_i \leq x \} + (1-\xi_i) 1\{ G_{(i)} \leq x \}\right) .
\]
Using Lemma \ref{lem:order-stats} it follows that
\begin{eqnarray*}
 {\mathbb E} [{\cal E} (x) ] & = & \frac1n {\mathbb E} \sum_{i=1}^n \xi_i 1\{ Y_i \leq x \} + \frac1n {\mathbb E} \sum_{i=1}^n 1\{ G_{(i)} \leq x \} - \frac1n {\mathbb E} \sum_{i=1}^n \xi_i 1\{ G_{(i)} \leq x \} \\
 & = & \frac1n \sum_{i=1}^n \alpha_i N(x) + R(x) - \frac1n \sum_{i=1}^n \alpha_i \Theta_{i:n} (R) (x) .
\end{eqnarray*}
This is the one-dimensional marginal of a randomly chosen member of the new sample ${\cal G}'$. The iteration of this update will be the basis to define the dynamics of our rank-driven dynamical system.

At this point we introduce a distinction between exogenous and endogenous evolution in a rank-driven dynamical system. In the first case the cdf $N(x)$ represents individuals from an outside (\emph{exogenous}) distribution replacing disappearing individuals. Thus in \emph{exogenous} evolution we consider $N(x)$ to be fixed. One can say that in this case $N(x)$ is the distribution that ``drives'' the dynamics. In contrast, \emph{endogenous} evolution would replace the selected fitnesses with new fitnesses from the same distribution. Those new individuals are thus taken from the same (marginal) distribution as the disappearing individuals. Thus in this case the driving measure is $R$ itself.

\begin{definition}
The following are the equations governing exogenous and endogenous evolution ($n\in \mathbb{N}$ fixed) for a rank-driven system
\begin{eqnarray}
\label{eq:defexo}
  \phi_{exo}(R)(x) & = & R(x) - \dfrac 1n \sum_{i=1}^n\,\alpha_i \,\Theta_{i:n} (R)(x) + \dfrac 1n \sum_{i=1}^n\,\alpha_i\,N(x) \\
\label{eq:defendo}
  \phi_{endo}(R)(x) & = & R(x) - \dfrac 1n \sum_{i=1}^n\,\alpha_i \,\Theta_{i:n} (R)(x) + \dfrac 1n \sum_{i=1}^n\,\alpha_i\,R(x)
\end{eqnarray}
where for all $i$, $\alpha_i\in[0,1]$.
\label{defn:exo-endo}
\end{definition}
Note that from Lemma \ref{lem:order-stats}, $\phi_{exo}(R)$ in Definition \ref{defn:exo-endo} can be written as
\[
  \phi_{exo}(R) = \dfrac1n \sum_{i=1}^n\,\left( (1-\alpha_i)\Theta_{i:n} (R) + \alpha_i N \right) .
\]
Since $\alpha_i \in [0,1]$ and $N$ and $\Theta_{i:n} (R)$ are distributions, it follows that $\phi_{exo} (R)$ is also a distribution. As a consequence, both equations map the space $M \equiv M^1([0,1])$ of cumulative distribution functions on $[0,1]$ to itself.

The intuition behind this exogenous dynamic model is that at each iteration we take out some values, namely for each $i$ we take out (on the average) $\alpha_i$ times the $i$-th lowest value, and replace those by random values from the cdf $N$. Our goal is the study of the marginal cumulative stationary distribution functions (with support in $[0,1]$) of the values $G$. This study will be based on the properties of order statistics, and in particular on the characterization of their distribution functions.

To conduct this analysis we will impose the additional assumption that, at each stage, the $G_1, \ldots , G_n$ are independent draws from their marginal distribution. Under this independence assumption, we now present several well-known results related to the behavior of order statistics and their distributions. Most of these results are standard and can be found in many basic references, such as for example \cite{DN}. These are presented without proof.

\begin{theorem}
Given a (cumulative) distribution (function) $R(x)$, for the $k$-out-of-$n$ order-statistics $G_{(k)}$ from an independent set of trials we have distribution functions $\Theta_{k:n} (R)$,
\[
  \Theta_{k:n} (R) = \sum_{i=k}^n\,\binom{n}{i}\,R^i(1-R)^{n-i}= 1- \sum_{i=0}^{k-1}\, \binom{n}{i}\,R^i(1-R)^{n-i}
\]
\label{theo:order-statistics}
\end{theorem}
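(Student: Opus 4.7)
The plan is to reduce the claim to a standard binomial tail calculation, which is made available precisely by the independence assumption just imposed. The key combinatorial observation is that for any finite collection of real numbers, the $k$-th smallest is at most $x$ if and only if at least $k$ of the values lie in $(-\infty, x]$. So I would first record the event equivalence
$$\{G_{(k)} \leq x\} \;=\; \bigl\{ \#\{i : G_i \leq x\} \geq k \bigr\}.$$
The inequalities are non-strict, so ties at the value $x$ require no special treatment.

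Next I would exploit independence. Since $G_1, \ldots, G_n$ are i.i.d.\ with common cdf $R$, the indicators $1\{G_i \leq x\}$ are i.i.d.\ Bernoulli with success probability $R(x)$, so their sum $S_n(x) := \sum_{i=1}^n 1\{G_i \leq x\}$ has a Binomial$(n, R(x))$ distribution. Taking the probability of the event above and summing the binomial pmf from $k$ to $n$ yields the first equality
$$\Theta_{k:n}(R)(x) \;=\; P\bigl(S_n(x) \geq k\bigr) \;=\; \sum_{i=k}^n \binom{n}{i} R(x)^i \bigl(1-R(x)\bigr)^{n-i}.$$

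The second equality is then the complementary form of the same tail, obtained from the binomial identity $\sum_{i=0}^{n} \binom{n}{i} R^i (1-R)^{n-i} = (R + (1-R))^n = 1$, which rewrites $P(S_n(x) \geq k) = 1 - P(S_n(x) \leq k-1)$. There is no serious obstacle here: once independence is in hand, the entire argument is a direct application of Bernoulli counting, which is essentially why the theorem is cited without proof in standard references.
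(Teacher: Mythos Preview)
Your argument is correct and is the standard proof: the event $\{G_{(k)}\le x\}$ is exactly the event that at least $k$ of the $n$ independent Bernoulli indicators $1\{G_i\le x\}$ succeed, so the probability is the upper tail of a Binomial$(n,R(x))$ distribution, and the complementary form follows from the binomial theorem. The paper itself does not prove this result; it is quoted as a well-known fact from order statistics (with a reference to David and Nagaraja) and explicitly ``presented without proof,'' so your write-up simply supplies the routine justification the paper omits.
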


\begin{lemma}
The partial of $\Theta_{k:n} (R)$ with respect to $R$ is non-negative, and is strictly positive if $R\in (0,1)$. Furthermore we have that
\begin{eqnarray*}
\partial_R \Theta_{1:n} (R) |_{R=0}=n \quad &\mbox{ and }& \quad \mbox{ if }\; k>1\; :\;\partial_R \Theta_{k:n} (R) |_{R=0}=0\\
\partial_R \Theta_{n:n} (R) |_{R=1}=n \quad &\mbox{ and }& \quad \mbox{ if }\; k<n\; :\;\partial_R \Theta_{k:n} (R)|_{R=1}=0
\end{eqnarray*}
\label{lem:derivatives}
\end{lemma}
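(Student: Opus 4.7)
The plan is to differentiate the closed form
\[
  \Theta_{k:n}(R) = \sum_{i=k}^n \binom{n}{i} R^i (1-R)^{n-i}
\]
supplied by Theorem \ref{theo:order-statistics} and recognize the result as (a rescaled) Beta$(k, n-k+1)$ density. Everything claimed in the lemma should fall out of that explicit formula.

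First I would compute $\partial_R \Theta_{k:n}(R)$ term by term, obtaining
\[
  \partial_R \Theta_{k:n}(R) = \sum_{i=k}^n \binom{n}{i}\bigl[i R^{i-1}(1-R)^{n-i} - (n-i) R^i (1-R)^{n-i-1}\bigr].
\]
Using the identities $i\binom{n}{i} = n\binom{n-1}{i-1}$ and $(n-i)\binom{n}{i} = n\binom{n-1}{i}$, pulling out the factor $n$, and shifting the index $j=i-1$ in the first sum converts the right-hand side to the telescoping difference
\[
  n\left[\sum_{j=k-1}^{n-1} \binom{n-1}{j} R^{j}(1-R)^{n-1-j} - \sum_{i=k}^{n-1} \binom{n-1}{i} R^{i}(1-R)^{n-1-i}\right],
\]
where I used $\binom{n-1}{n}=0$ to trim the second sum. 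All terms cancel except the $j=k-1$ term, yielding
\[
  \partial_R \Theta_{k:n}(R) = n\binom{n-1}{k-1} R^{k-1}(1-R)^{n-k} = \frac{n!}{(k-1)!(n-k)!}\,R^{k-1}(1-R)^{n-k}.
\]
The non-negativity on $[0,1]$ and strict positivity on $(0,1)$ are then immediate.

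For the boundary values, I just plug in. The factor $R^{k-1}$ vanishes at $R=0$ unless $k=1$; when $k=1$ the derivative reduces to $n(1-R)^{n-1}$, giving $n$ at $R=0$. Symmetrically, $(1-R)^{n-k}$ vanishes at $R=1$ unless $k=n$, and for $k=n$ the derivative is $nR^{n-1}$, which equals $n$ at $R=1$.

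The only step that requires genuine care is the telescoping manipulation; everything else is bookkeeping. I do not expect any real obstacle, since the identity $\partial_R \Theta_{k:n}(R) = n\binom{n-1}{k-1}R^{k-1}(1-R)^{n-k}$ is classical (it is the density of the $k$-th uniform order statistic) and can alternatively be derived by integration by parts on the Beta integral representation of $\Theta_{k:n}$, providing a cross-check should the algebra look suspect.
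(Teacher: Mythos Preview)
Your proof is correct and follows the same approach as the paper: both obtain the explicit formula $\partial_R \Theta_{k:n}(R) = n\binom{n-1}{k-1}R^{k-1}(1-R)^{n-k}$ from Theorem~\ref{theo:order-statistics}, after which the sign and boundary claims are immediate. The only difference is that the paper simply states this derivative identity, whereas you supply the telescoping computation that justifies it.
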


\begin{proof}
This result follows from the expressions for $\Theta_{k:n} (R)$ in Theorem \ref{theo:order-statistics} being differentiable with respect to $R$, and satisfying
\[
  \partial_R \Theta_{k:n} (R) = n \binom{n-1}{k-1}R^{k-1}(1-R)^{n-k} .
\]
\end{proof}

\begin{lemma}
Let $\alpha_i$ as in Definition \ref{defn:exo-endo}. Then $\partial_R \left(\frac 1n\,\sum_i\,\alpha_i \Theta_{i:n} (R) \right) (x) \in [0,1]$. It can only be equal to one if $R(x)$ is 0 or 1.
\label{lem:derivatives2}
\end{lemma}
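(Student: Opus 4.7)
The plan is straightforward: differentiate term by term and apply the closed-form expression $\partial_R \Theta_{k:n}(R) = n\binom{n-1}{k-1} R^{k-1}(1-R)^{n-k}$ supplied by Lemma \ref{lem:derivatives}. This immediately converts the claim into an assertion about a single polynomial in $R$.

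Carrying this out yields
\[
\partial_R \left( \frac{1}{n} \sum_{i=1}^{n} \alpha_i\, \Theta_{i:n}(R) \right)(x) \;=\; \sum_{i=1}^{n} \alpha_i \binom{n-1}{i-1} R(x)^{i-1} (1-R(x))^{n-i}.
\]
Since each $\alpha_i \in [0,1]$ and each summand is a product of nonnegative factors when $R(x) \in [0,1]$, nonnegativity is immediate. For the upper bound, the key observation is that replacing each $\alpha_i$ by $1$ weakly increases every nonnegative summand, after which the resulting sum collapses by the binomial theorem to $(R+(1-R))^{n-1} = 1$.

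For the strict inequality, note that when $R(x) \in (0,1)$ every monomial $R^{i-1}(1-R)^{n-i}$ is strictly positive, so saturating the bound would force $\alpha_i = 1$ for every $i$. Conversely, at $R(x) = 0$ only the $i = 1$ term contributes (giving $\alpha_1$), and at $R(x) = 1$ only the $i = n$ term contributes (giving $\alpha_n$), either of which can legitimately equal $1$. The one cosmetic subtlety, and the only point where the argument requires any care, is that the last sentence of the lemma should be read with the tacit exclusion of the degenerate case $\alpha_i \equiv 1$ (in which the derivative equals $1$ identically on $[0,1]$); beyond bookkeeping this hypothesis, I do not anticipate any real obstacle, as the whole proof is essentially a one-line invocation of the binomial theorem applied to the derivative formula from Lemma \ref{lem:derivatives}.
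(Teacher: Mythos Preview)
Your proof is correct and essentially the same as the paper's: both hinge on the identity $\frac{1}{n}\sum_i \partial_R \Theta_{i:n}(R) = 1$, which you obtain via the explicit derivative formula and the binomial theorem, while the paper obtains it by differentiating Lemma~\ref{lem:order-stats} and then splitting the sum into the $\alpha_i$ and $(1-\alpha_i)$ parts. Your explicit observation about the degenerate case $\alpha_i \equiv 1$ is a nice touch---the paper glosses over this point.
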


\begin{proof}
We note that $\partial_R\,R=1$ which, by Lemma \ref{lem:order-stats}, equals:
\begin{equation*}
  \partial_R \left(\frac 1n\,\sum_i\,\Theta_{i:n} \right)
  = \partial_R \left(\frac 1n\, \sum_i\,\alpha_i \Theta_{i:n} \right) + \partial_R \left(\frac 1n\,\sum_i\,(1-\alpha_i) \Theta_{i:n}\right)
\end{equation*}
Since all the coefficients are nonnegative, we have that by the previous Lemma each of these two terms is nonnegative with the extremal case only possible if $R=0$ or $R=1$.
\end{proof}

It is amusing to illustrate these concepts with a simple coin-toss example. As usual we denote the Heaviside step function with the step at $r$ by $H(x-r)$ and its distributional derivative the delta function by $\delta(x-r)$. Suppose
\begin{equation*}
  R(x) = \frac 12 \left( H(x) + H(x-1)\right) \mbox{ or } \rho(x)= \frac 12 \left( \delta(x) + \delta(x-1)\right).
\end{equation*}
This corresponds to throwing head (=0) or tail (=1) with equal probability. One easily checks that for a set of two independent trials one has a probability of $\frac 34$ that the lowest throw is a 0 and the same probability that the highest throw is a 1. Indeed the formulae of Theorem \ref{theo:order-statistics} give us that
\begin{equation*}
  \Theta_{1:2} = 2R - R^2 \quad \mbox{ and } \quad \Theta_{2:2}= R^2
\end{equation*}
Note that these distributions sum to $2R$ as in Lemma \ref{lem:order-stats} and that their derivatives with respect to $R$ satisfy Lemma \ref{lem:derivatives}. It is also straightforward to check from this that
\begin{equation*}
  \Theta_{1:2}(R)(x) = \frac 34 H(x) + \frac 14 H(x-1) \quad \mbox{ and } \quad
  \Theta_{2:2}(R)(x) = \frac 14 H(x) + \frac 34 H(x-1)
\end{equation*}
These expression illustrate the fact that even though the $\Theta_{1:n}(R)(x)$ may very well be singular, the $\Theta_{1:2}$ are smooth polynomials in $R$.



\section{Rank Driven Dynamics}
\label{chap:zero}

As we mentioned above, in order to work with a manageable representation of the distribution functions of the order statistics, we will study the system assuming that the different components of $G$ were independent.

Note that in general, even if we start with $n$ i.i.d. random variables, this independence property will not be preserved by our dynamical systems. Thus, this assumption would seem to be quite strong, but we believe it is not entirely unjustified. For example, it has been observed that it appears that in the limit as $N \rightarrow \infty$ the fitnesses behave as if they are independent \cite{MS}, and that at the end of an avalanche the fitnesses of the elements affected by it are independent \cite{MZ2}. Also, it seems reasonable to assume that the stationary distribution for this process is at least interchangeable, implying that its marginal distribution could be recovered from the fixed-point of the dynamical system under the independence assumption. One interesting insight resulting from our assumption and our ability to recover the limiting behavior of the marginal distributions in the BS model, see Section \ref{chap:asympt}, is that the dependence structure of these models does not seem to impact this limiting behavior.
Note that we impose the condition that the $\xi_i$ and $Y_i$ used to define our iteration are independent, and independent of ${\cal G}$. This condition corresponds to the structure used in the model defined in \cite{GKW2}.

We now show that under this assumption the measures $\phi^\ell(R)(x)$ converge.

\begin{theorem}
Let $\phi_{exo}$ and $\phi_{endo}$ be the dynamical systems on $M$ as given in Definition \ref{defn:exo-endo}. Assume that at each step the distributions of the fitnesses are independent. Then there are unique $R_{exo}^*=\Gamma_{exo}^*(N)$ and $R_{endo}^* = \Gamma^*_{endo} (R_0)$ in $M$ such that for every starting measure $R_0(x)$ and each $x\in[0,1]$ we have
\begin{eqnarray}
\label{eq:exo}
  \lim_{n\rightarrow\infty} \phi_{exo}^n(R(x))&=& (\Gamma_{exo}^*\circ N)(x) \\
\label{eq:endo}
  \lim_{n\rightarrow\infty} \phi_{endo}^n(R(x))&=& (\Gamma_{endo}^*\circ R_0)(x)
\end{eqnarray}
Furthermore $\Gamma_{exo}^*$ is strictly increasing and $C^\infty$ on $(0,1)$ while $R^*_{endo} = \Gamma_{endo}^*(R_0)$ is the cdf corresponding to a weighted sum of finitely many delta distributions.
\label{theo:theo1}
\end{theorem}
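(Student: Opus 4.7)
The plan is to reduce the infinite-dimensional iteration on cdfs to a one-parameter family of scalar iterations on $[0,1]$. The key observation, visible from Theorem \ref{theo:order-statistics}, is that $\Theta_{i:n}(R)(x)$ depends on $R$ only through the scalar $R(x)$, being the polynomial $\sum_{j\ge i}\binom{n}{j}R(x)^j(1-R(x))^{n-j}$. Consequently both $\phi_{exo}$ and $\phi_{endo}$ act pointwise: for each fixed $x$, the orbit $R_\ell(x)=\phi^\ell(R_0)(x)$ evolves under a scalar continuous self-map of $[0,1]$, independently of the values of $R_\ell$ elsewhere. The entire proof therefore reduces to analyzing two families of one-dimensional dynamical systems parameterized by $x$.

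For the exogenous case I would start from the rewrite $\phi_{exo}(R)(x)=g(R(x))+cN(x)$ already recorded in the excerpt, where $g(r):=\frac{1}{n}\sum_i(1-\alpha_i)\Theta_{i:n}(r)$ and $c:=\frac{1}{n}\sum_i\alpha_i$. By Lemma \ref{lem:derivatives2}, $g'(r)\in[0,1]$ with $g'(r)<1$ on $(0,1)$ whenever some $\alpha_i>0$ (the degenerate case $\alpha_i\equiv 0$ makes $\phi_{exo}$ the identity). Hence $h(r):=r-g(r)$ is a polynomial with $h(0)=0$, $h(1)=c$, and $h'(r)>0$ on $(0,1)$, so it is a homeomorphism $[0,1]\to[0,c]$ that is $C^\infty$-smooth with $C^\infty$-smooth inverse on the interior. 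For each $x$ the fixed-point equation $F_x(r):=g(r)+cN(x)=r$ is equivalent to $h(r)=cN(x)$, with unique solution $\Gamma^*_{exo}(N(x)):=h^{-1}(cN(x))$. Convergence $\phi_{exo}^\ell(R_0)(x)\to\Gamma^*_{exo}(N(x))$ will follow because $F_x$ is continuous, non-decreasing, and sends $[0,1]$ into itself, so every orbit is monotone and bounded, hence convergent, and can only converge to the unique fixed point. Strict monotonicity and $C^\infty$-smoothness of $\Gamma^*_{exo}$ on $(0,1)$ are immediate from the inverse function theorem applied to $h$.

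For the endogenous case the pointwise map is $G(r):=(1+c)r-\frac{1}{n}\sum_i\alpha_i\Theta_{i:n}(r)$. A direct computation using Lemma \ref{lem:derivatives2} gives $G'(r)\ge c\ge 0$, together with $G(0)=0$ and $G(1)=1$, so $G$ is a monotone non-decreasing polynomial endomorphism of $[0,1]$. Monotonicity forces every orbit $\{G^\ell(r_0)\}$ to be a monotone bounded sequence, hence to converge to a fixed point of $G$. The set $F:=\{r\in[0,1]:G(r)=r\}$ is the zero set of the polynomial $G(r)-r$ and therefore finite, $F=\{0=p_0<p_1<\dots<p_k=1\}$. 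On each sub-interval $(p_j,p_{j+1})$ the sign of $G(r)-r$ is constant and determines whether orbits there tend to $p_j$ or to $p_{j+1}$; this defines a non-decreasing map $\pi:[0,1]\to F$, and $\Gamma^*_{endo}(R_0):=\pi\circ R_0$ is the desired pointwise limit. As a composition of non-decreasing maps it is non-decreasing, and since it takes only finitely many values it is a step-function cdf, i.e., the cdf of a finite weighted sum of point masses.

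I expect the main obstacle to be the sign/basin bookkeeping in the endogenous case: the conclusion rests on the scalar dynamics having no limit cycles and no genuinely non-convergent orbits, which here is delivered cleanly by the monotonicity of $G$, but one must still carry out the sign analysis of $G(r)-r$ on each sub-interval and handle initial data $R_0$ whose level sets coincide with a fixed point of $G$. The exogenous side is comparatively straightforward once $h$ is identified: existence, uniqueness, convergence, monotonicity and smoothness all drop out of the properties of the polynomial $h$ and the inverse function theorem.
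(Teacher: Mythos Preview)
Your proposal is correct and follows essentially the same route as the paper: both reduce the cdf iteration to a one-dimensional polynomial self-map of $[0,1]$ via the observation that $\Theta_{i:n}(R)(x)$ depends only on the scalar $R(x)$, and then exploit monotonicity and the polynomial structure. The only cosmetic differences are that the paper invokes a contraction/Mean Value Theorem argument and the Implicit Function Theorem where you use monotone-orbit convergence and the explicit inverse $h^{-1}$, and the paper (like you) tacitly excludes the degenerate case where $\phi$ is the identity by assuming $q<n$.
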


\begin{figure}[pbth]
\center
\includegraphics[width=2.2in]{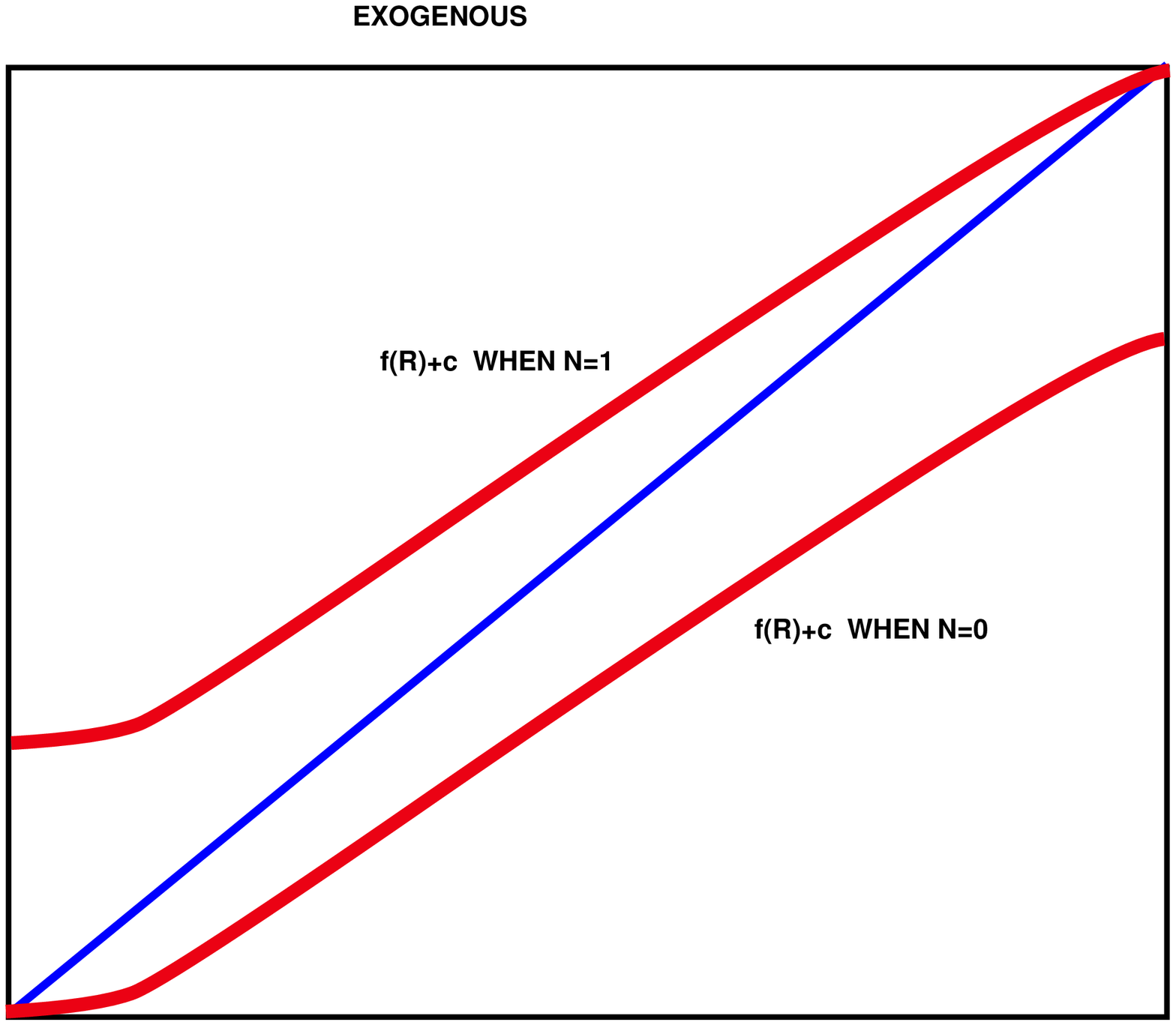}
\includegraphics[width=2.2in]{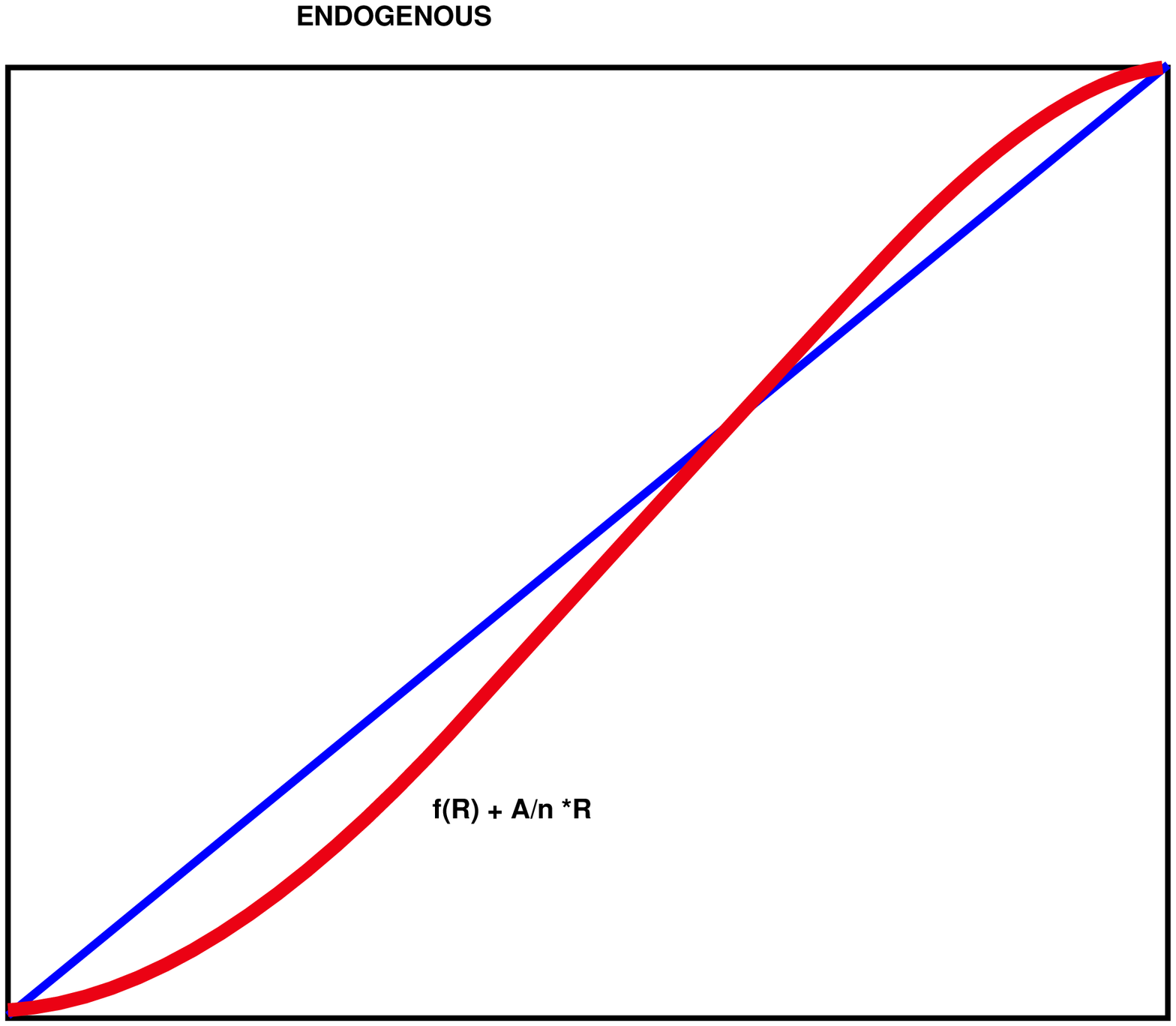}
\caption{\emph{In the first figure the 1-parameter family of attracting dynamical systems induced by $\phi_{exo}$; in this case there is always a unique attracting fixed point. In the second, the dynamical system induced by $\phi_{endo}$; now repelling fixed points are possible as illustrated in the figure.}}
\label{spec8}
\end{figure}

\begin{proof} From Definition \ref{defn:exo-endo} we see that the dynamical system on $M$ given by $R\rightarrow \phi(R)$ induces an associated dynamical system $\tilde \phi$ from $I=[0,1]$ to itself, via the commuting diagram below ($Id$ is the identity):
\begin{equation*}
\begin{array}{ccccc}
& & \tilde\phi &&\\
&I&\rightarrow&I& \\
R& \uparrow &&\uparrow & \phi R\\
&I&\rightarrow&I& \\
& & Id &&
\end{array}
\end{equation*}
From now on we replace the study of $\phi$ by that of $\tilde \phi$ and since no confusion is likely to arise we drop the tilde from the notation. Letting $q \equiv \sum_i \alpha_i$, we get:
\begin{equation}
\begin{array}{ccc}
  \phi_{exo}(R) & = & R - \dfrac 1n \sum_{i=1}^n\,\alpha_i \,\Theta_{i:n}(R) + \dfrac qn \,N \\
  \phi_{endo}(R)&=& \left(1+\frac qn\right) R - \dfrac 1n \sum_{i=1}^n\,\alpha_i \,\Theta_{i:n}(R)
\end{array}
\label{eqn:fixedpoints}
\end{equation}
In both cases the induced map $\phi$ is monotone (because it maps cumulative distribution functions to cumulative distribution functions). From Theorem \ref{theo:order-statistics} it is polynomial and not equal to the identity (assuming $q < n$). Thus it can only have finitely many fixed points.

For the exogenous case we obtain a 1-parameter family of dynamical systems of the form:
\begin{equation*}
  \phi_{exo}(R)=f(R)+\dfrac qn \,N
\end{equation*}
see Figure \ref{spec8}. By Lemma \ref{lem:derivatives2} we have that $0<|f'(R)|<1$ (except possibly at the endpoints). Thus, by the Mean Value Theorem, for value of $N$, there is a unique fixed point $R^* = \Gamma^* (N)$ which is a global attractor. Here $\Gamma^*$ denotes the mapping from $M$ to $M$ that captures the dependency of the fixed-point distribution $R^*$ on the driving distribution $N$. It is easily seen that $f(0)=0$ and $f(1)=1-q/n$ and therefore that $R^*(0)=0$ and $R^*(1)=1$. We can use the Implicit Function Theorem to get $C^\infty$ dependence of $R^*$ on $N$. The dependence on $x$ is given by $R^*(x) = \Gamma^*(N(x))$.

In the endogenous case we obtain a dynamical system of the form:
\begin{equation*}
  \phi_{endo}(R)=f(R)+\frac qn R
\end{equation*}
Since as before $f(0)=0$ and $f(1)=1-q/n$, we conclude that $\phi(0)=0$ and $\phi(1)=1$ (see Figure \ref{spec8}). We already know that $\phi$ has finitely many fixed points. Since $\phi$ is nondecreasing all fixed points with slope less than 1 must be attracting, and those with slope greater than 1 are repelling. Thus for any starting distribution $R_0$, we have that $\Gamma^* (R_0)=\lim_{n\rightarrow \infty} \phi^n(R_0)$ assumes finitely many values. The limiting distribution is given by $R^*(x) = \Gamma^*(R_0(x))$.
\end{proof}

{\bf Remark:} It follows from the last part of the proof that $\Gamma^* (R_0)$ is constant in a neighborhood of an attracting fixed point. In fact it is constant in the entire basin of attraction. Thus, it has a discontinuity on a boundary of a basin of attraction. This gives rise to the somewhat counter-intuitive fact that the limiting measure is concentrated on the \emph{repelling} fixed points (or the marginal ones).

We will later need a result that is immediately implied by Theorem \ref{theo:theo1} and Equation \ref{eqn:fixedpoints} in its proof.

\begin{lemma}
The fixed points of $\phi_{exo}$ and $\phi_{endo}$ are determined by:
\begin{eqnarray*}
\mbox{ For }\phi_{exo}\quad &:& \quad \sum_{i=1}^n\,\alpha_i \Theta_{i:n} (R) = q N \\
\mbox{ For }\phi_{endo}\quad &:& \quad \sum_{i=1}^n\,\alpha_i \Theta_{i:n} (R) = q R
\end{eqnarray*}
\label{lem:fixedpoints}
\end{lemma}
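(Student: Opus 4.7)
The plan is simply to invoke the defining equations of $\phi_{exo}$ and $\phi_{endo}$ from Definition \ref{defn:exo-endo}, impose the fixed-point condition $\phi(R) = R$, and rearrange. Since both defining equations act pointwise in $x$, the fixed-point equation $\phi(R)(x) = R(x)$ may be read off coordinatewise in $x \in [0,1]$; this is nothing more than the reduction to the induced scalar dynamical system $\tilde\phi \colon [0,1] \to [0,1]$ constructed in the commuting diagram in the proof of Theorem \ref{theo:theo1}.

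For the exogenous case I would set $\phi_{exo}(R) = R$ in equation \ref{eq:defexo}, cancel the term $R$ on both sides, and multiply through by $n$, to obtain $\sum_{i=1}^n \alpha_i \Theta_{i:n}(R) = q N$, where $q = \sum_i \alpha_i$ as introduced just before equation \ref{eqn:fixedpoints}.

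For the endogenous case the computation is identical in spirit: substituting $\phi_{endo}(R) = R$ into \ref{eq:defendo}, the final sum collapses to $(q/n) R$, so that after cancelling $R$ on both sides and multiplying by $n$ one reaches $\sum_{i=1}^n \alpha_i \Theta_{i:n}(R) = q R$. Equivalently, one may start from the compact form $\phi_{endo}(R) = f(R) + (q/n) R$ already displayed in \ref{eqn:fixedpoints} and proceed in the same way.

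There is no real obstacle to overcome here: the statement is a direct rewriting of the defining equations once the $R$-term that survives on the right-hand side is cancelled against the $R$ on the left. The only point worth emphasising is that Theorem \ref{theo:theo1} already guarantees the existence (and, in the exogenous case, uniqueness and attractivity) of such fixed points, so the equations presented in the lemma are not vacuous and correctly characterise the objects of interest.
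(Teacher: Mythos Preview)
Your proposal is correct and matches the paper's own treatment: the paper states that the lemma is ``immediately implied by Theorem \ref{theo:theo1} and Equation \ref{eqn:fixedpoints} in its proof,'' which is exactly the substitution-and-cancellation argument you spell out. There is nothing to add.
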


\section{Examples of Rank Driven Dynamics}
\label{chap:two}

In this section we illustrate some of the results that can be obtained from the application of the preceding Theorems. We start with a very simple example: an array of $n>1$ numbers where we only replace the lowest ranked member, that is, $\alpha_1 = 1$ and $\alpha_k = 0$ for $k \geq 2$. We study both the endogenous and exogenous cases.
\begin{eqnarray*}
  \phi_{exo}(R) & = & R - \dfrac 1n \Theta_{1:n} (R) + \dfrac 1n N \\
  \phi_{endo}(R) & = & (1+\dfrac1n)R - \dfrac 1n \Theta_{1:n} (R)
\end{eqnarray*}
Since we have
\begin{equation*}
  R_{1:n} = 1-(1-R)^n
\end{equation*}
we get
\begin{eqnarray*}
  \phi_{exo}(R)&=& R -\dfrac 1n (1-(1-R)^n) + \dfrac 1n N\\
  \phi_{endo}(R)&=& (1+\dfrac1n)R - \dfrac 1n (1-(1-R)^n)
\end{eqnarray*}
We solve in both cases for the fixed points (see Figure \ref{fig1}):
\begin{equation}
\left\{
  \begin{array}{lcl}
  {\rm exogenous} & : & \quad 1-(1-R)^n = N \; \Rightarrow \; \Gamma_{exo}^*(N) = 1-(1-N)^{1/n} \\[.12in]
  {\rm endogenous} & : & \quad R-1+(1-R)^n=0 \; \Rightarrow \; R=0 \mbox{ or } R=1\; \\
  & & \quad \Rightarrow \; \Gamma_{endo}^*(R_0)=H_1(R_0)
  \end{array}
  \right.
\label{eq:explicit}
\end{equation}
where $H_\alpha$ is the Heaviside function with jump at $\alpha$. In the exogenous case this gives a smooth function (not differentiable at $N=1$). In the endogenous case, $R_0=0$ is an attracting fixed point and $R_0=1$ a repelling one. Hence the limiting measure is concentrated in $R_0=1$ (see the remark at the end of Section \ref{chap:zero}). From Theorem \ref{theo:theo1} we see that if we choose $N(x)=R_0(x)=x$ then equation \ref{eq:explicit} gives the solutions (depicted in Figure \ref{fig1}).

\begin{figure}[ptbh]
\centering
\includegraphics[width=2.2in]{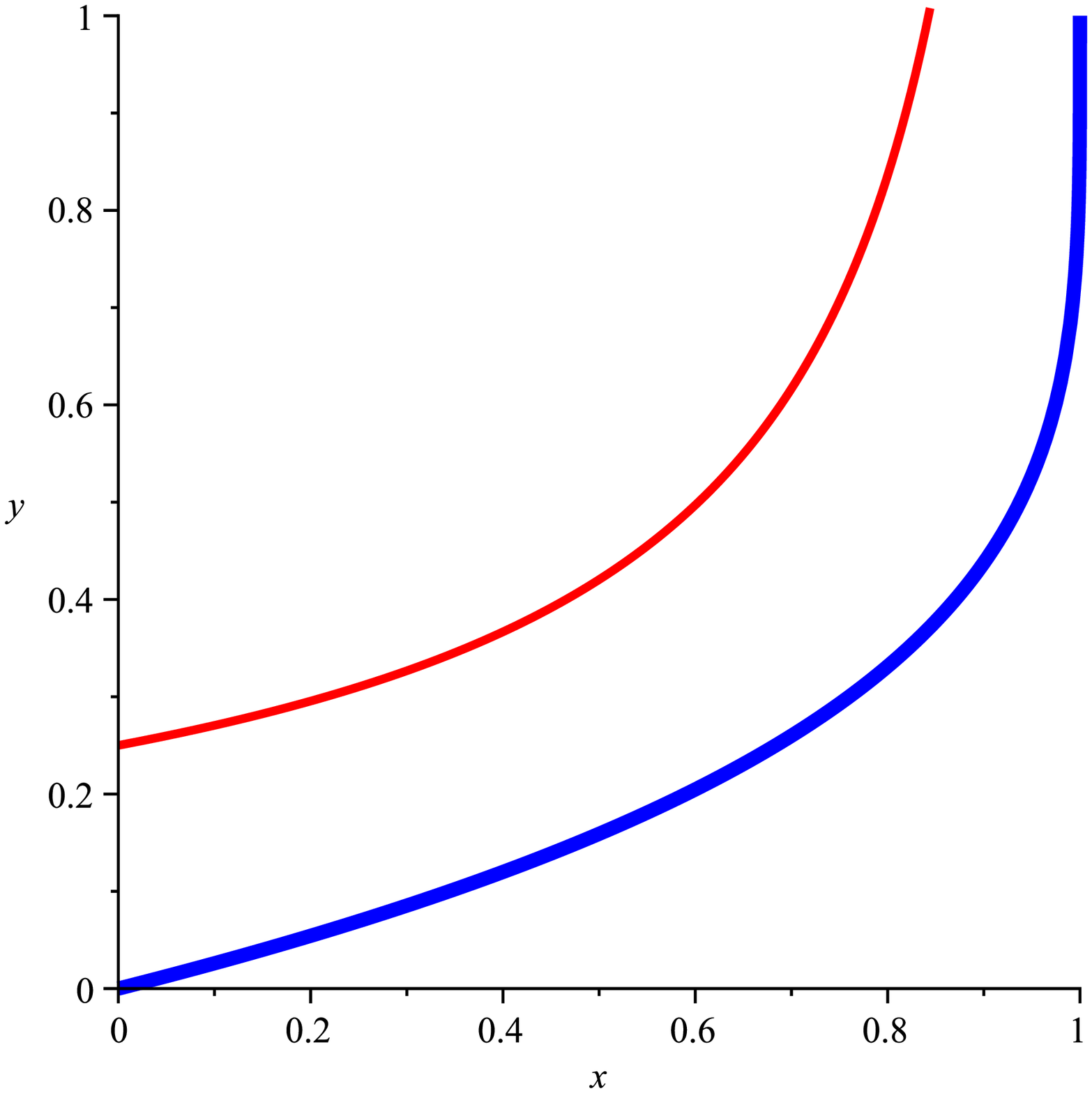}
\includegraphics[width=2.2in]{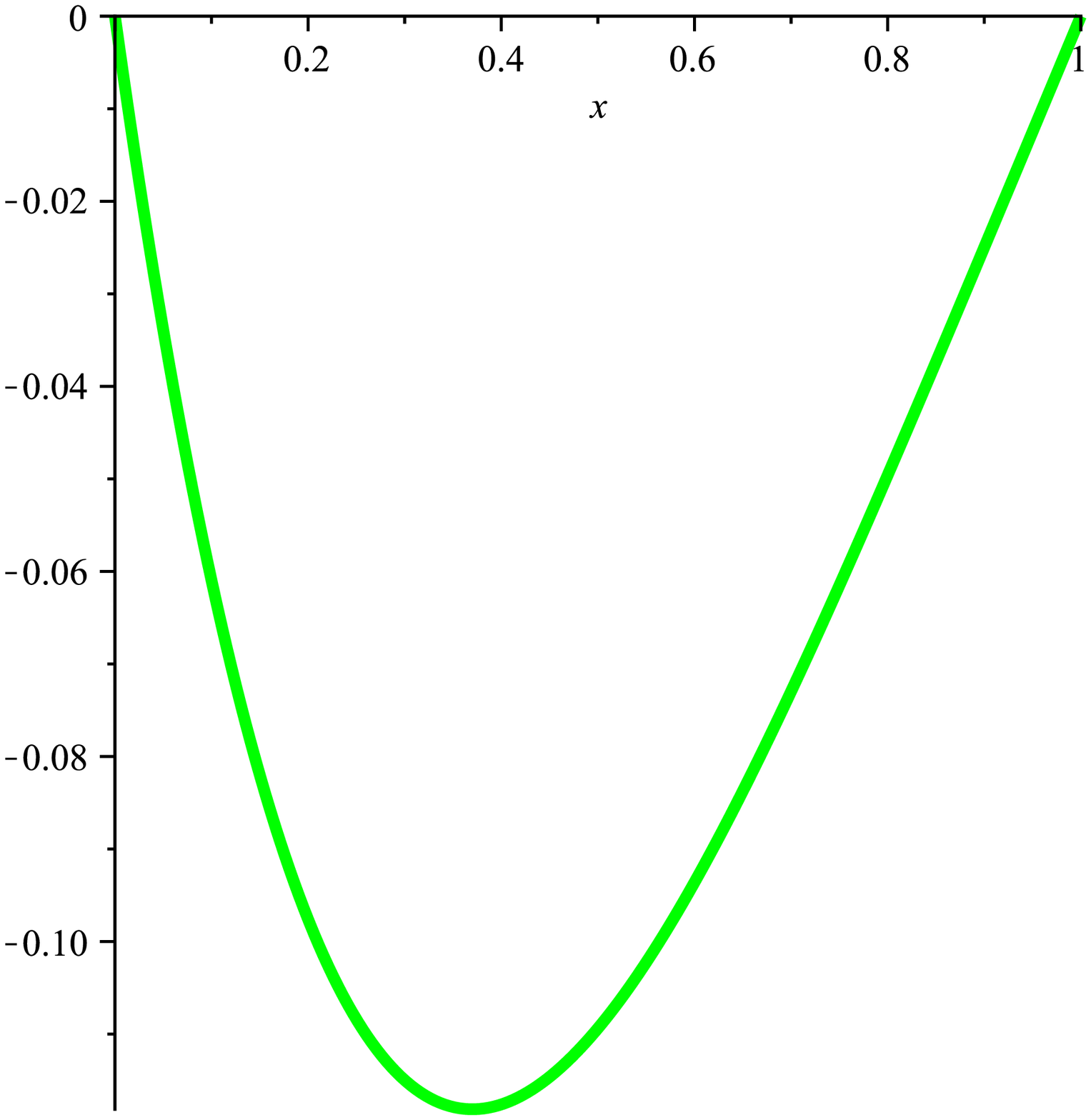}
\caption{\emph{In (\ref{eq:defexo}) and (\ref{eq:defendo}) we use $n=4$, $\alpha_1=1$ and $\alpha_i=0$ for $i>1$. The first figure shows the solution $\Gamma^*_{exo}(N)$ given in Equation (\ref{eq:explicit}) (thick blue curve) and its density (thinner red curve) for exogenous evolution. In the second figure we plot $\phi_{endo}(R)-R$ to indicate where the fixed points of $\phi_{endo}$ are located. The limiting measure is concentrated on the zero of this function with positive slope, ie: $R=1$.}}
\label{fig1}
\end{figure}

\begin{figure}[ptbh]
\centering
\includegraphics[height=2.2in]{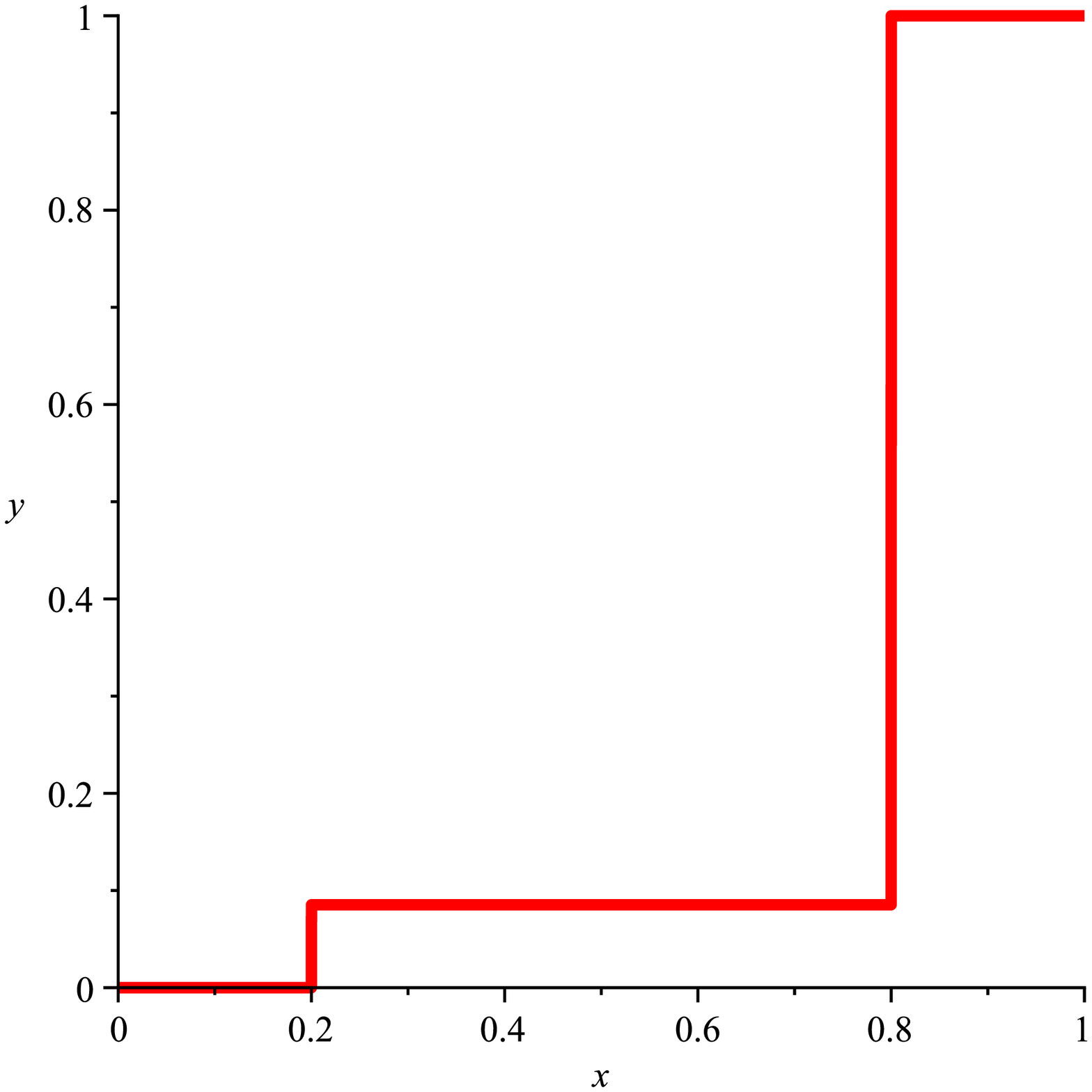}
\includegraphics[height=2.2in]{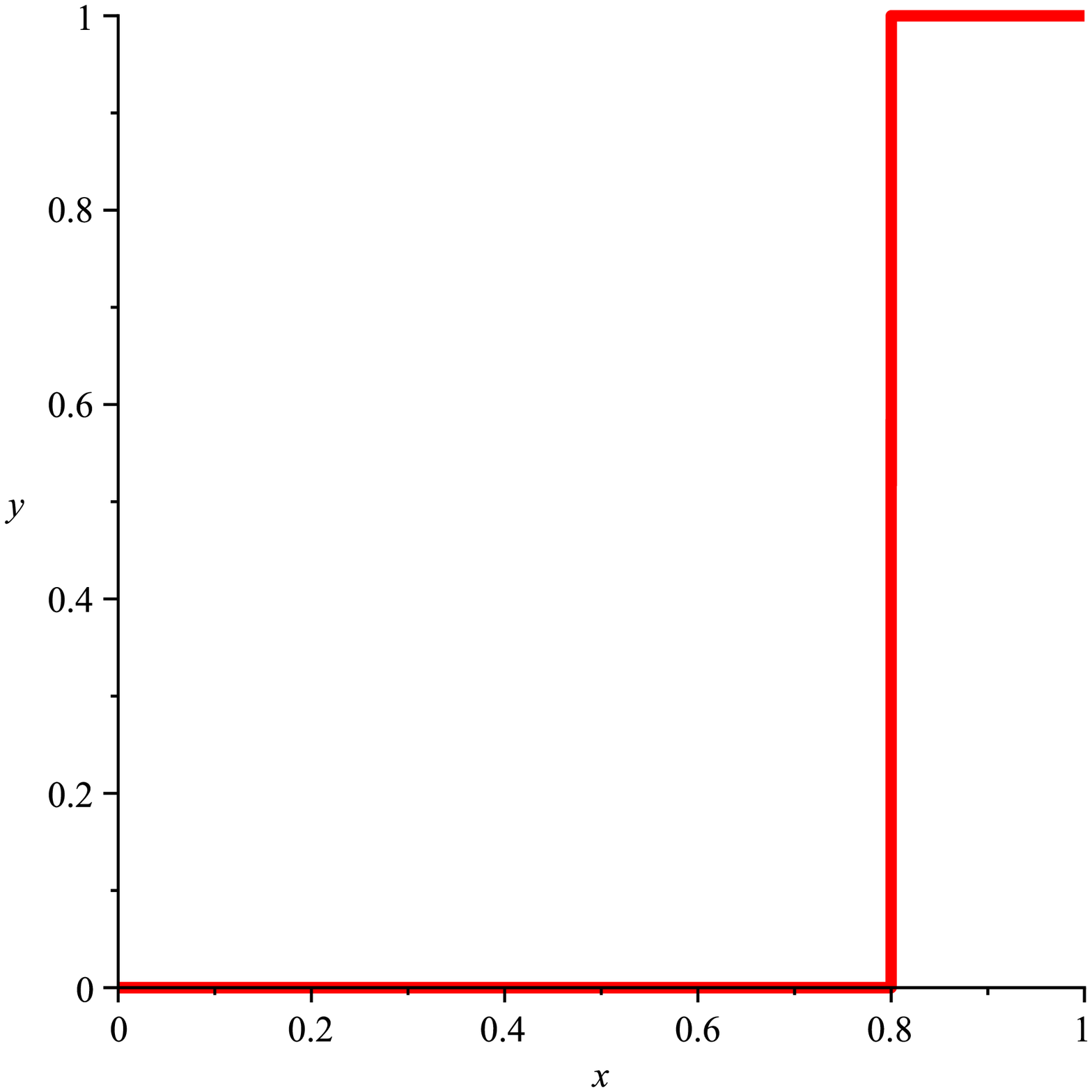}
\caption{\emph{For the same problem as Figure \ref{fig1} we choose particular initial conditions, namely $N$ and $R_0$ are given by the unfair coin-toss in (\ref{eq:unfaircoin}). We plotted the limiting measure $\Gamma_{exo}^*(N(x))$ in the first picture and $\Gamma_{endo}^*(R_0(x))$ in the second.}}
\label{fig:coin-toss}
\end{figure}

\begin{figure}[ptbh]
\centering
\includegraphics[width=2.2in]{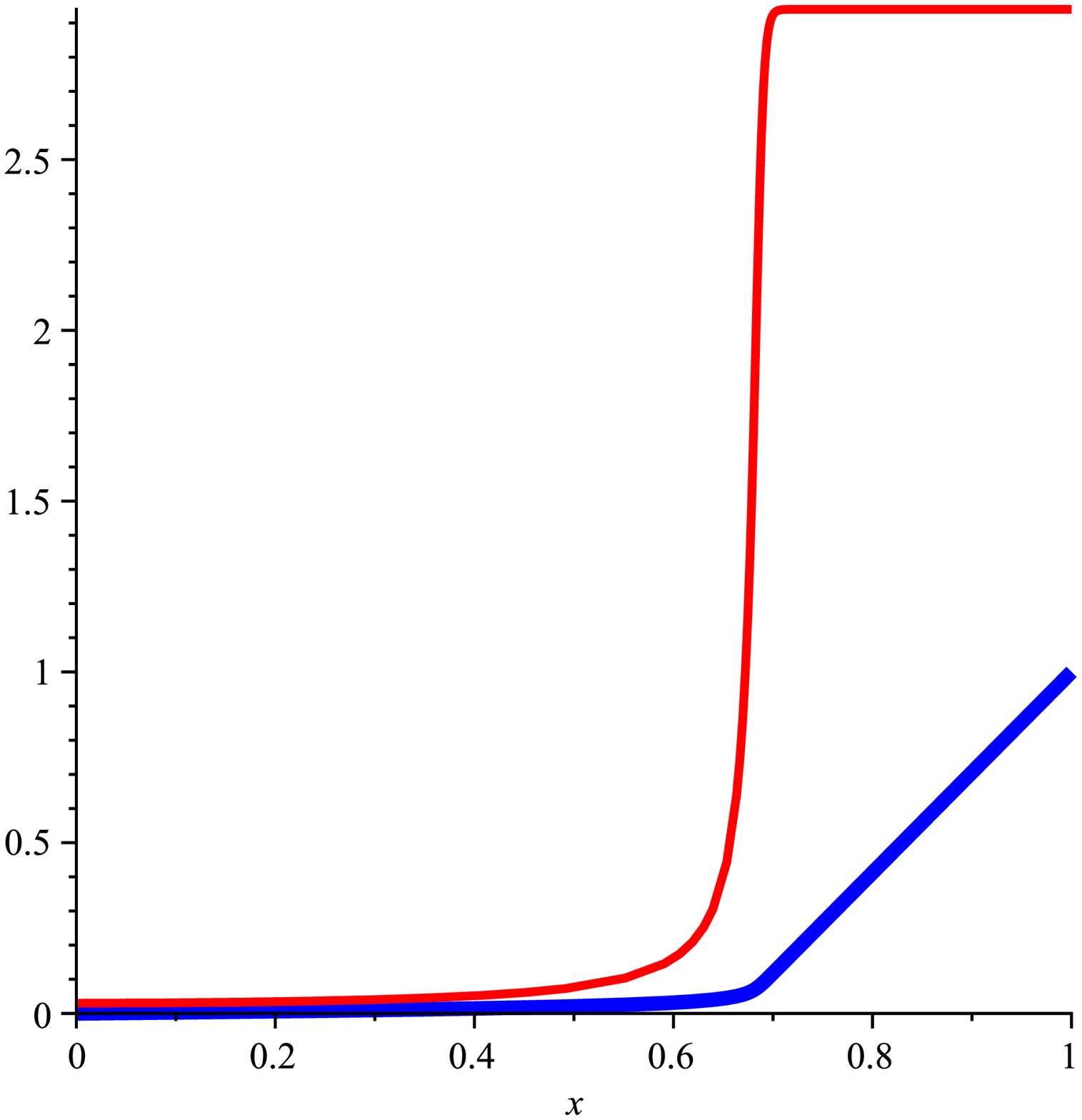}
\includegraphics[width=2.2in]{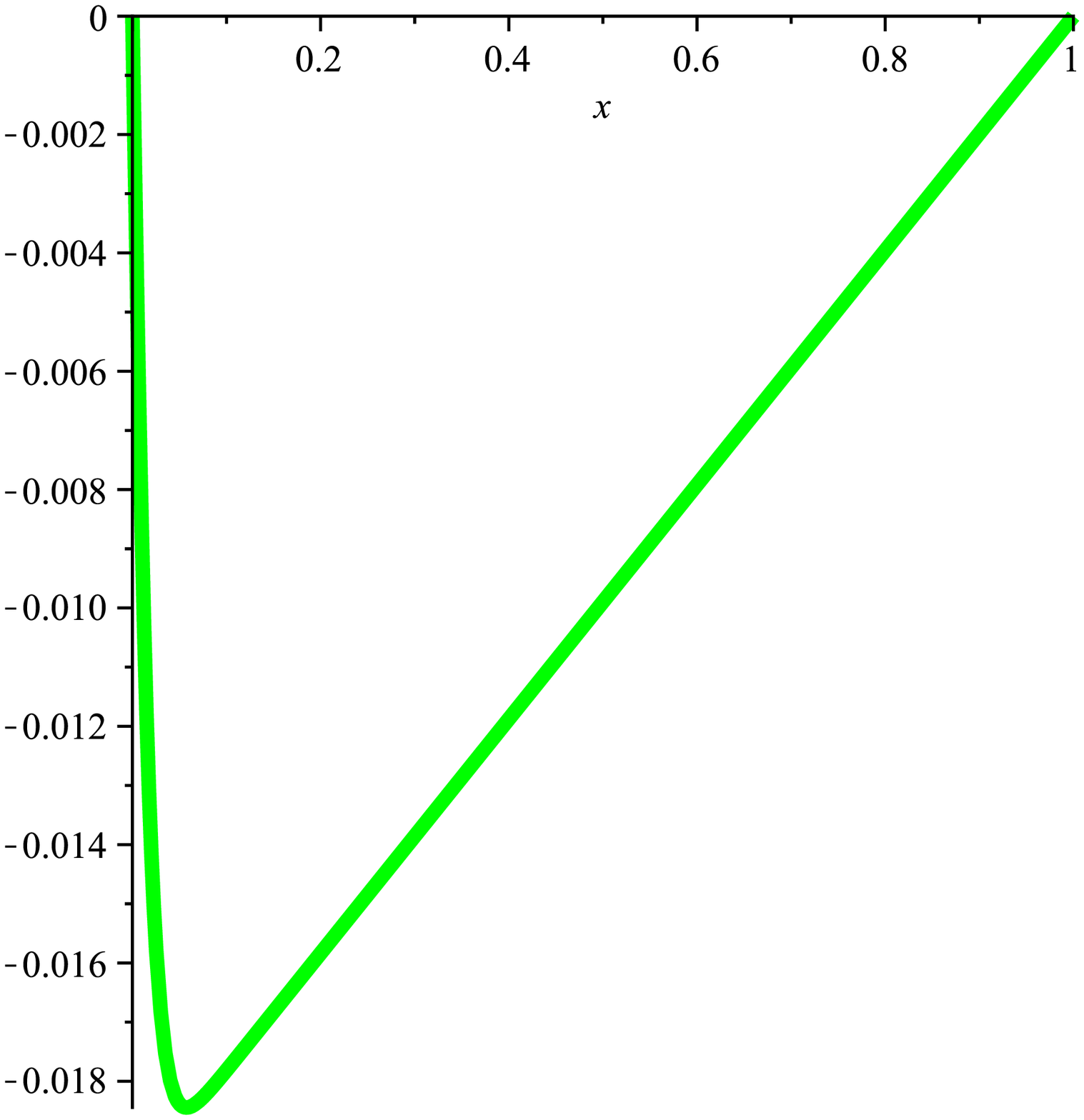}
\caption{\emph{In (\ref{eq:defexo}) and (\ref{eq:defendo}) we use $n=100$, $\alpha_1=\alpha_2=1$ and $\alpha_i=1/(n-2)$ for $i>2$. The first figure is the solution $\Gamma^*_{exo}$ (in red) from (\ref{eq:exo}) and (in blue) its density. In the second figure we draw $\phi_{endo} (R)-R$ to indicate the fixed points.}}
\label{fig5}
\end{figure}

\begin{figure}[ptbh]
\centering
\includegraphics[width=2.2in]{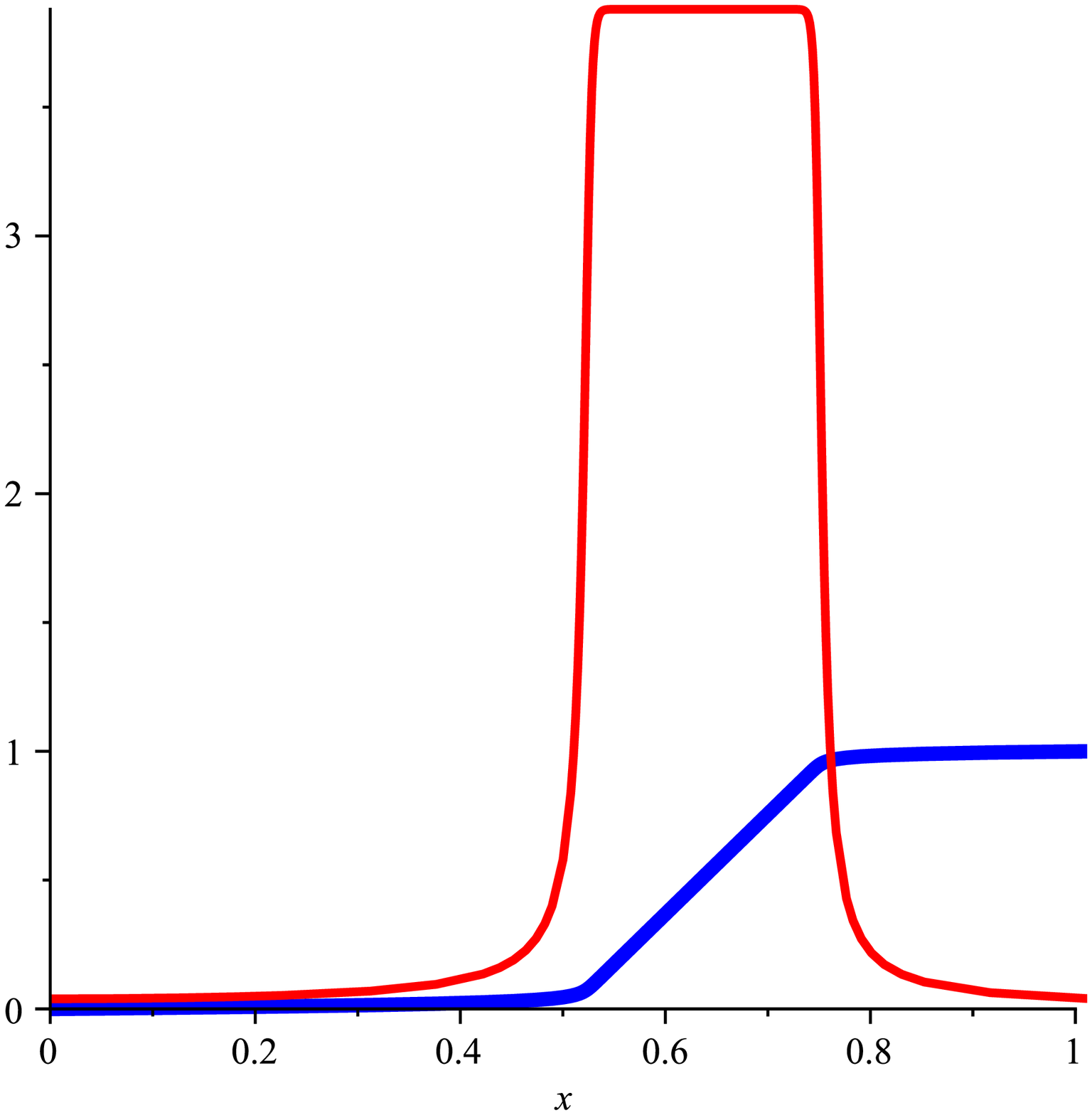}
\includegraphics[width=2.2in]{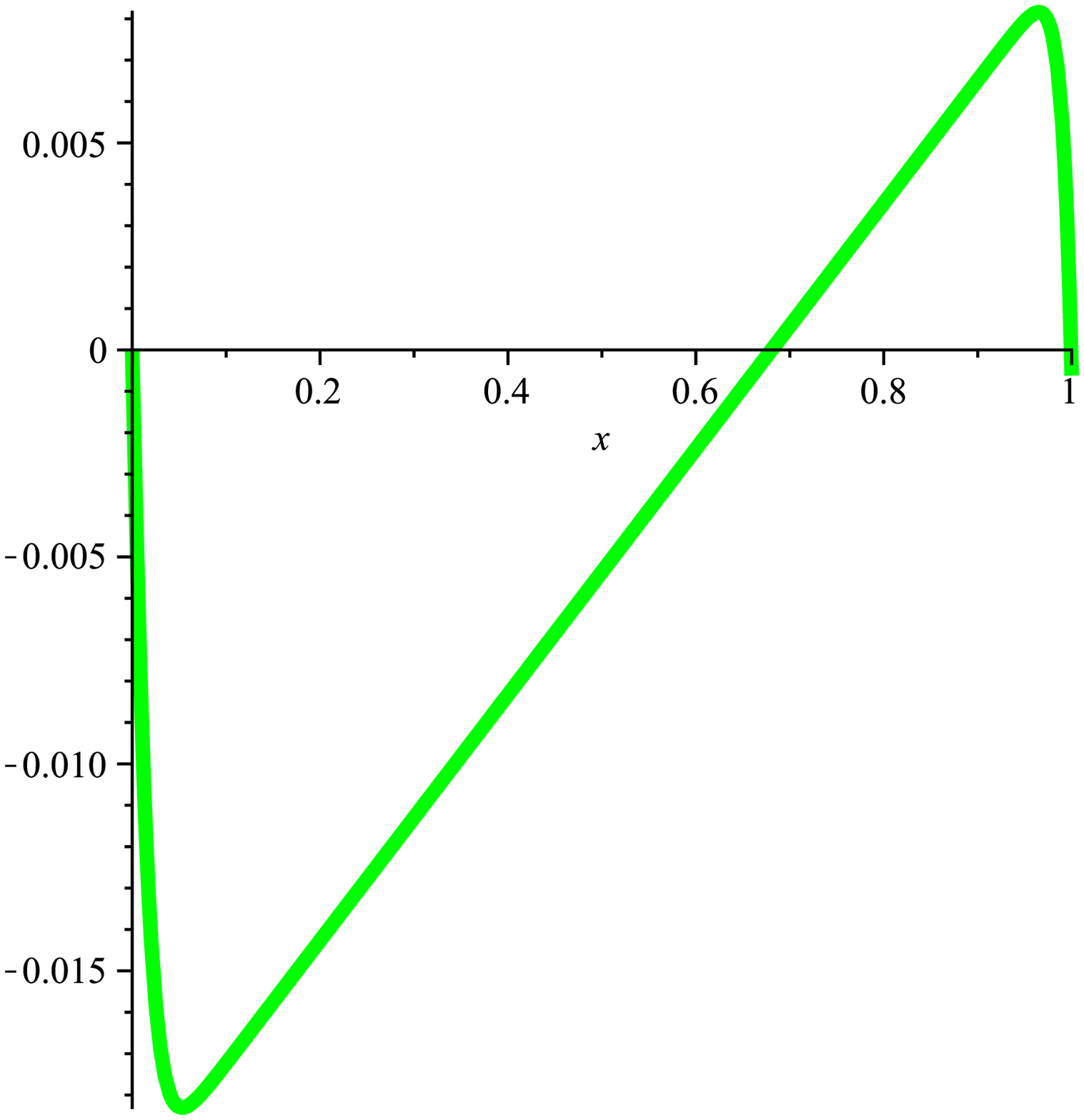}
\caption{\emph{In (\ref{eq:defexo}) and (\ref{eq:defendo}) we use $n=100$, $\alpha_1=\alpha_2=\alpha_n=1$ and $\alpha_i=1/(n-3)$ otherwise. The first figure is the solution $\Gamma^*_{exo}$ (in red) from (\ref{eq:exo}) and (in blue) its density. In the second figure we draw $\phi_{endo} (R)-R$ to indicate the fixed points.}}
\label{fig7}
\end{figure}

\begin{figure}[ptbh]
\centering
\includegraphics[width=2.2in]{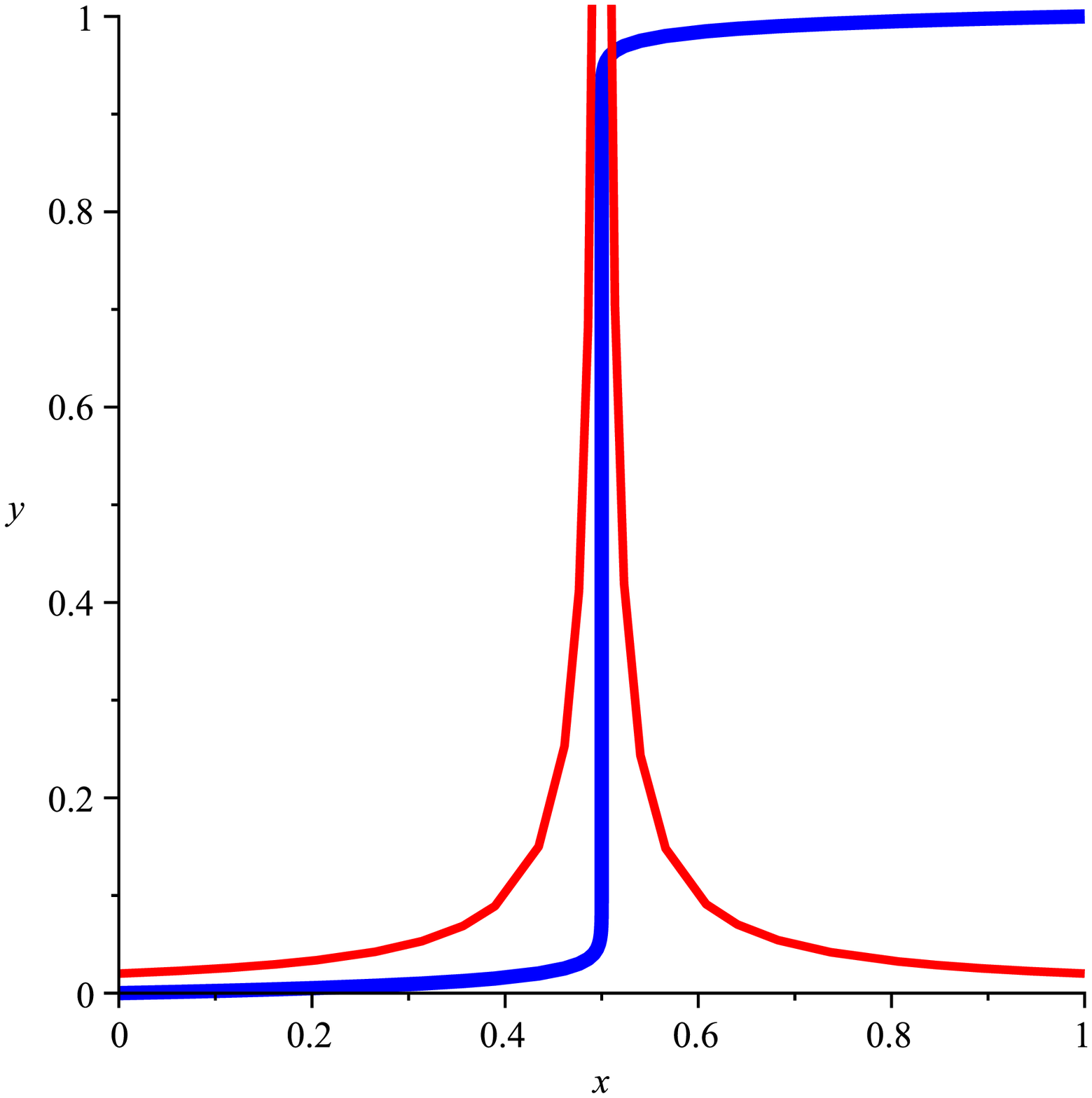}
\includegraphics[width=2.2in]{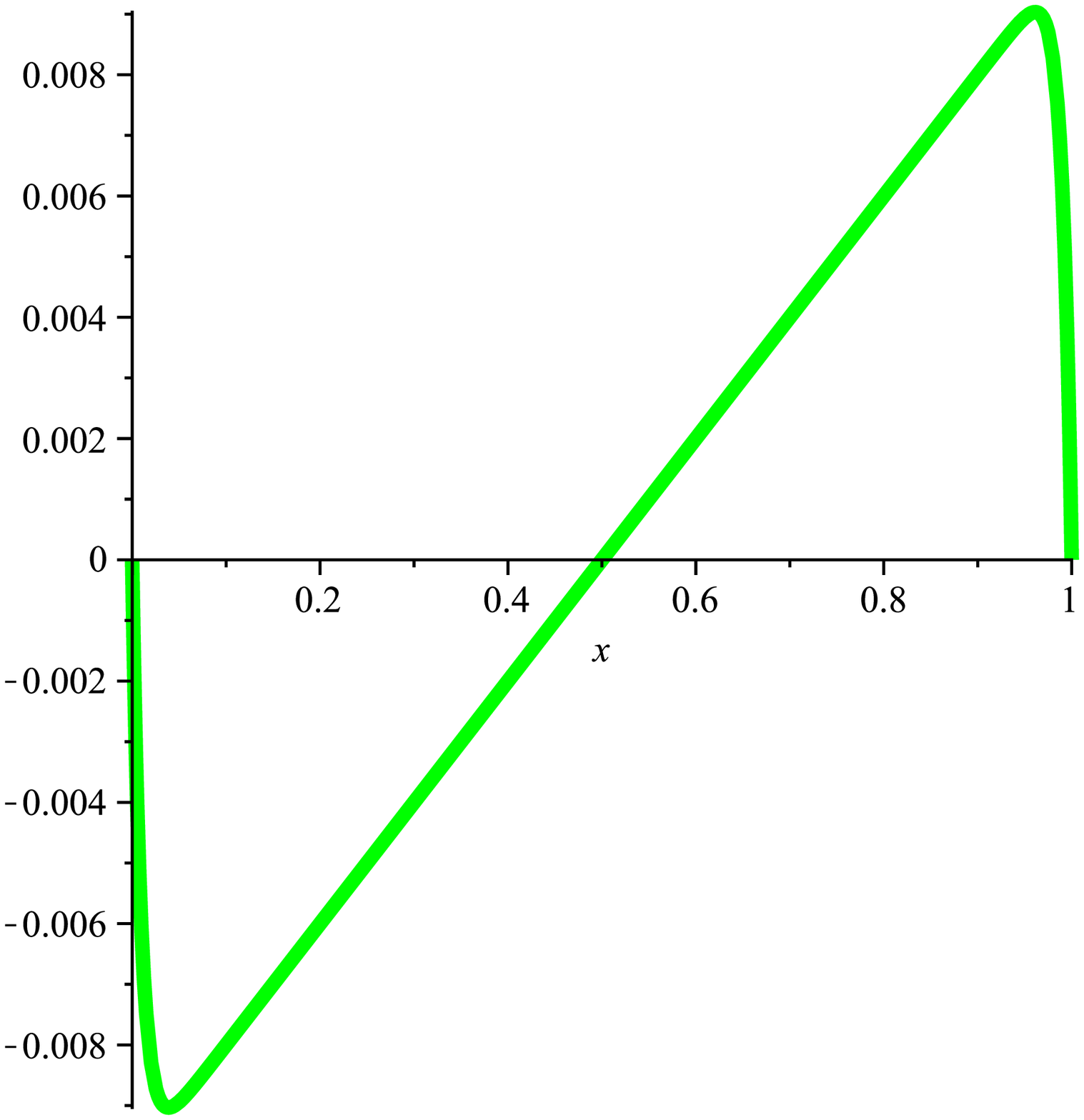}
\caption{\emph{In (\ref{eq:defexo}) and (\ref{eq:defendo}) we use $n=100$, $\alpha_1=\alpha_n=1$ and $\alpha_i=0$ otherwise. The first figure is the solution $\Gamma^*_{exo}$ (in red) from (\ref{eq:exo}) and (in blue) its density. In the second figure we draw $\phi_{endo} (R)-R$ to indicate the fixed points.}}
\label{fig6}
\end{figure}

The output depends on the initial conditions, however. For example, suppose in the same problem that we just discussed, we now have initial conditions corresponding to an unfair coin-toss, that is:
\begin{equation}
  N(x) = R_0(x) = pH_\alpha+qH_\beta = 0.3H_{0.2}+0.7H_{0.8} .
\label{eq:unfaircoin}
\end{equation}
Here, $0<\alpha<\beta<1$ and $p+q$ = 1. It can easily be checked analytically that we obtain for $\Gamma_{exo}^*(N(x))$ and $\Gamma_{endo}^*(N(x))$ the distributions of Figure \ref{fig:coin-toss}. Notice that the limiting distribution in the exogenous case is singular because the distribution $N(x)$ is singular (see Theorem \ref{theo:theo1}).

A few more examples are presented in Figures \ref{fig5}, \ref{fig6}, and \ref{fig7}. In these figures we draw $\Gamma_{exo}^*(N)$ (and its derivative) and $\phi_{endo}(R)-R$. The parameter values are specified in the Figures. In Figure \ref{fig5} we chose the weights $\alpha_i$ \emph{qualitatively} similar to what has been observed numerically in BS. We analyze this situation in detail in the next section. In Figure \ref{fig7} both low and high numbers are replaced giving rise to the distribution given there. The curious \emph{flat part} of the limiting distribution actually arises by virtue of replacing \emph{random} numbers in the distribution. In Figure \ref{fig6} we did not replace ``random'' numbers but only the lowest and the highest ranked numbers. The resulting exogenous distribution looks singular but isn't (by Theorem \ref{theo:theo1}). However one can show that the maximum of the density grows exponentially in $n$.

\section{Asymptotic behavior of exogenous rank-driven models}
\label{chap:asympt}

In this Section we show that our exogenous model can mimic the observed limiting (large number of agents) behavior of the distribution in the Bak-Sneppen model. To give an idea of the results of this section, consider as an example the model with $n > 2$, $\alpha_1=\alpha_2=1$, $\alpha_i=1/(n-2)$, for $i = 3,\ldots,n$ and $N(x)=x$ that was discussed in the previous section and exhibited in the first plot of Figure \ref{fig5}. This model has a behavior similar to that of the Bak-Sneppen model. Proposition \ref{prop:boundaries} below implies that
\begin{equation*}
  \partial_x|_{N=0}\,\Gamma^*\circ N(x)=1 \quad \mbox{ and }\quad
  \partial_x|_{N=1}\,\Gamma^*\circ N(x)=n^{-1}
\end{equation*}
The main theorem in this section (Theorem \ref{theo:theo2}) implies that if we fix $x<2/3$, then as $n$ tends to infinity $\Gamma^*\circ N(x)$ tends to 0. However if we fix $x>2/3$ then as $n$ tends to infinity $\Gamma^*\circ N(x)$ tends to $3x-2$.

We start with a proposition that characterizes the behavior of the fixed-point distribution as $N\rightarrow 0$ and $N\rightarrow 1$.

\begin{proposition}
Theorem \ref{theo:theo1} implies that the exogenous system of Definition \ref{defn:exo-endo} has a unique solution $\Gamma^*(N)$. We have:
\begin{equation*}
\partial_N|_{N=0}\,\Gamma^*(N)=\dfrac{q}{n\alpha_1} \quad \mbox{ and }\quad
\partial_N|_{N=1}\,\Gamma^*(N)=\dfrac{q}{n\alpha_n}
\end{equation*}
\label{prop:boundaries}
\end{proposition}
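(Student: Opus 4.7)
The strategy is to implicitly differentiate the fixed-point characterization provided by Lemma \ref{lem:fixedpoints}, namely
$$\sum_{i=1}^n \alpha_i\,\Theta_{i:n}(\Gamma^*(N)) \;=\; q\,N,$$
with respect to $N$, and then collapse the resulting sum using the endpoint values from Lemma \ref{lem:derivatives}. The chain rule gives
$$\left(\sum_{i=1}^n \alpha_i\,\partial_R\Theta_{i:n}(R)\bigg|_{R=\Gamma^*(N)}\right)(\Gamma^*)'(N) \;=\; q,$$
so $(\Gamma^*)'(N) = q \big/ \sum_i \alpha_i\,\partial_R\Theta_{i:n}(\Gamma^*(N))$, and the whole problem reduces to evaluating the denominator at the two endpoints.

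My first preliminary step is to identify the endpoint values $\Gamma^*(0)$ and $\Gamma^*(1)$. Plugging $N=0$ into the fixed-point equation forces $\sum_i \alpha_i\,\Theta_{i:n}(R^*)=0$; since each $\Theta_{i:n}$ is nonnegative and $\Theta_{1:n}$ is strictly positive on $(0,1)$ by Lemma \ref{lem:derivatives}, this pins down $R^*=0$. Likewise $N=1$ forces $R^*=1$ via $\Theta_{i:n}(1)=1$. (These identities are already observed in the proof of Theorem \ref{theo:theo1}.)

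Next I simply evaluate. By Lemma \ref{lem:derivatives}, at $R=0$ only the $i=1$ term in the sum survives, with value $\alpha_1\cdot n$; dividing $q$ by $n\alpha_1$ yields the first formula. At $R=1$ only the $i=n$ term survives, with value $\alpha_n\cdot n$, and the same division yields the second formula.

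The one genuinely delicate point, and the main obstacle in the plan, is justifying differentiability of $\Gamma^*$ at the endpoints, since Theorem \ref{theo:theo1} only guarantees $C^\infty$ dependence on the open interval $(0,1)$. I would handle this by invoking the Implicit Function Theorem directly at $(R,N)=(0,0)$ and $(R,N)=(1,1)$: the defining relation is a polynomial in $R$, and the relevant partial (the denominator above) equals $n\alpha_1$ and $n\alpha_n$ respectively, which is nonzero as long as $\alpha_1>0$ and $\alpha_n>0$. In the degenerate case $\alpha_1=0$ (resp.\ $\alpha_n=0$) the expression $q/(n\alpha_1)$ is to be read as $+\infty$, which matches the IFT picture of $\Gamma^*$ having a vertical tangent at the corresponding endpoint. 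Apart from this bookkeeping at the boundary, the proof is a direct application of Lemmas \ref{lem:fixedpoints} and \ref{lem:derivatives}.
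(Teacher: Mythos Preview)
Your proof is correct and follows essentially the same approach as the paper: differentiate the fixed-point relation from Lemma \ref{lem:fixedpoints} via the chain rule and then apply Lemma \ref{lem:derivatives} at $R=0$ and $R=1$. Your version is in fact more thorough, since you explicitly verify $\Gamma^*(0)=0$, $\Gamma^*(1)=1$ and address the endpoint differentiability and the degenerate cases $\alpha_1=0$, $\alpha_n=0$, which the paper glosses over.
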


\begin{proof}
This follows directly from differentiation of Lemma \ref{lem:fixedpoints} using the chain rule:
\begin{equation*}
\partial_{R^*} \left(\sum_{i=1}^n\,\alpha_i \Theta_{i:n}(R^*)\right)\,\partial_N \Gamma^* (N) = \partial_N (qN) = q
\end{equation*}
and subsequent application of Lemma \ref{lem:derivatives}.
\end{proof}

To formulate the main theorem we need the following refinement of the model in Definition \ref{defn:exo-endo}. For the remainder of the Section we will be considering a sequence of models for increasing $n$, and we will be particularly interested in the case $n \rightarrow \infty$. In particular, $\alpha_i \equiv \alpha_i (n)$ in all that follows.

\begin{definition}
Let $\alpha_i$ as introduced in Definition \ref{defn:exo-endo}. We will say that a sequence (indexed by $n$) of sequences $\{ \alpha_i(n) \}_{i=1}^n$ is \emph{a $(p,q)$-sequence}, if there exist $p,q > 0$ independent of $n$, and $k(n)$, $\tilde \alpha(n)$, and $\delta_i(n)$ for $i > k(n)$ with the following properties:
\begin{enumerate}
  \item $p\geq\sum_{i=1}^{k(n)}\,\alpha_i$, $p < \sum_{i=1}^{k(n)+1}\,\alpha_i$ and $q=\sum_{i=1}^n\,\alpha_i$ and $\forall\; i>k(n)\;:\; \alpha_i=\tilde \alpha (n) + \delta_i (n)$
  \item $1 < k(n) < n$ and $k(n) = o(n/\ln n)$
  \item $\max_{i > k(n)} | \delta_i (n) | = o(1/n)$
\end{enumerate}
\label{def:pandq}
\end{definition}

\noindent
To illustrate this definition, consider the following examples of $(p,q)$-sequences: a) $\alpha_i = 2/\sqrt{n}$ for $i\leq \sqrt{n}$ and $\alpha_i = 1/(n-\sqrt{n})$ for $i > \sqrt{n}$, with $k(n) = \sqrt{n}$ (and $p = 2$); b) $\alpha_1 = \alpha_2 = 1$ and $\alpha_i = 1/(n-2)$ for $i \geq 3$, with $k(n) = 2$ (and $p = 2$). Alternatively, the values $\alpha_i = 3/n$ would define a sequence that is not $(p,q)$.

For $(p,q)$-sequences we can write
\begin{equation}
\label{eq:kovern}
   \alpha_i = \frac{q-p}{n-k(n)} + o(1/n) , \quad  \tilde p \equiv \sum_{i=1}^{k(n)} \alpha_i , \quad \tilde \alpha = \frac{q-p}{n-k(n)} .
\end{equation}

An important observation is that if a sequence is $(p,q)$, then the values
$p$ and $q$ are unique. The statement for $q$ is trivial. Assume that there exist $p_1$, $p_2$ with $p_2 \geq p_1$ such that the conditions in Definition 2 are satisfied for both values. We must have functions $k_1(n) \leq k_2(n)$ associated to $p_1$ and $p_2$ respectively, satisfying these conditions. Consider the sum
\[
    S(n) \equiv \sum_{i=k_1(n)+1}^{k_2(n)} \alpha_i > p_2 - p_1 \geq 0 .
\]
From Definition 2 it follows that
\begin{eqnarray*}
    \sum_{i=k_1(n)+1}^{k_2(n)} \alpha_i & = & \left( k_2(n) - k_1(n) - 1 \right) \tilde \alpha (n) + \sum_{i=k_1(n)+1}^{k_2(n)} \delta_i (n) \\
    & \leq & \left( k_2(n) - k_1(n) - 1 \right) \left( \frac{q-p}{n-k(n)} + \max_{i > k(n)} | \delta_i (n) | \right) = o\left( \frac{1}{\ln n} \right) ,
\end{eqnarray*}
This implies $S(n) \rightarrow 0$ and $p_2 = p_1$.

\vskip .1in
To prove the main Theorem we will need a technical Lemma. This result makes use of a function
$m(n):\mathbb{N} \rightarrow \mathbb{N}$ defined as
\begin{equation}
\label{eq:defm}
  m(n) \equiv \lceil k(n) \ln n \rceil.
\end{equation}

If Definition 2 holds, then for $\epsilon \in (0,1)$ and $n$ large enough we have
\begin{eqnarray}
\label{eq:defyc}
   m(n) & = & o(n) , \\
\label{eq:defya}
  \lim_{n\rightarrow \infty} m(n) & = & \infty , \\
\label{eq:defyb}
  m(n) & > & (k(n) + \epsilon ) \ln m(n) .
\end{eqnarray}

\begin{lemma}
Suppose $m(n)$ and $k(n)$ satisfy (\ref{eq:defm}), (\ref{eq:defyc}), (\ref{eq:defya}),
and (\ref{eq:defyb}). For $j \leq k(n)$ define
\begin{equation*}
  P_{j,n} (m) \equiv  \sum_{i=0}^{j-1}\,{n \choose i} \left(\frac{m(n)}{n} \right)^i
  \left(1- \frac{m(n)}{n}\right)^{n-i}
\end{equation*}
Then
\begin{equation*}
  \lim_{n\rightarrow\infty}\,\max_{1\leq j\leq k(n)}\,P_{j,n} (m) = 0
\end{equation*}
\label{lem:technical}
\end{lemma}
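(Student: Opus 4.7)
The plan is to recognize $P_{j,n}(m)$ as the lower-tail probability of a binomial distribution and control it by a Chernoff-type estimate. Writing $X_n \sim \mathrm{Bin}(n, m(n)/n)$, so $\mathbb{E} X_n = m(n)$, we have $P_{j,n}(m) = \Pr[X_n \leq j-1]$. Condition (\ref{eq:defyb}) forces $m(n) > k(n)\ln m(n)$ for $n$ large, hence $m(n)/k(n) \to \infty$, and in particular $j-1 \leq k(n)-1$ lies far below the mean $m(n)$. This places us deep in the lower tail of $X_n$, where exponential decay is expected.

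First I would apply the standard Chernoff bound for the binomial lower tail,
\[
   \Pr[X_n \leq a] \;\leq\; \left(\frac{e\,m(n)}{a}\right)^{\!a} e^{-m(n)}, \qquad 0 \leq a \leq m(n),
\]
interpreted as $e^{-m(n)}$ at $a=0$. Setting $a = j-1$ and taking logarithms gives the uniform bound
\[
   \ln P_{j,n}(m) \;\leq\; (j-1)\Bigl(1 + \ln \tfrac{m(n)}{j-1}\Bigr) - m(n).
\]
The map $t \mapsto t(1 + \ln(m(n)/t))$ has derivative $\ln(m(n)/t)$, so it is strictly increasing on $[0,m(n))$. Since (\ref{eq:defyb}) guarantees $k(n)-1 < m(n)$ eventually, the supremum over $1 \leq j \leq k(n)$ is attained at $j = k(n)$.

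The last step is to show this supremum tends to $-\infty$. Setting $r_n \equiv m(n)/(k(n)-1)$, conditions (\ref{eq:defya}) and (\ref{eq:defyb}) give $r_n \to \infty$, so the elementary inequality $1 + \ln r \leq r/2$ (valid for $r$ large) yields
\[
   \max_{1\leq j\leq k(n)} \ln P_{j,n}(m) \;\leq\; (k(n)-1)\bigl(1 + \ln r_n - r_n\bigr) \;\leq\; -\tfrac{1}{2}m(n) \;\to\; -\infty,
\]
as required. The main obstacle I anticipate is bookkeeping in the regime $k(n) \to \infty$: the cruder route via $\binom{n}{i} \leq n^i/i!$ and Stirling produces a spurious multiplicative factor of order $\exp(m(n)k(n)/n)$, which one can control only after carefully invoking $k(n) = o(n/\ln n)$ from (\ref{eq:defm}). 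Routing the argument through the Chernoff inequality and the sharper condition (\ref{eq:defyb}) on $\ln m(n)$, as above, is what keeps the decay rate transparent and uniform in $j$.
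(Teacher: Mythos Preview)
Your argument via the Chernoff bound is correct, and the monotonicity of $t\mapsto t(1+\ln(m/t))$ on $[0,m)$ cleanly identifies $j=k(n)$ as the worst case. The resulting bound $P_{k,n}\leq e^{-m(n)/2}$ is valid and even sharper than needed.

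The paper takes a different, more elementary route: it simply notes $P_{j,n}\leq P_{k,n}$, applies the crude estimate $\binom{n}{i}\leq n^i/i!$ together with $(1-m/n)^{-i}\leq 2/i!$ (using $n>2m$ from~(\ref{eq:defyc})), and bounds the geometric-type sum by $m^k$. This yields $P_{k,n}\leq 2(1-m/n)^n m^k\leq 2e^{-m}m^k$, after which condition~(\ref{eq:defyb}) is invoked directly as $m-k\ln m>\epsilon\ln m$ to obtain $P_{k,n}\leq 2m^{-\epsilon}$. So the paper \emph{does} follow what you call the ``cruder route'' and does so without any spurious $\exp(mk/n)$ factor; your closing remark slightly mischaracterizes that alternative. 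What your Chernoff approach buys is a faster decay rate (exponential in $m$ rather than polynomial) and a cleaner use of~(\ref{eq:defyb}) only through $m/k\to\infty$; what the paper's approach buys is that it avoids quoting any packaged tail inequality and keeps everything to two-line estimates.
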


\begin{proof}

We have that $P_{j,n}\leq P_{k,n}$ and $P_{k,n}$ satisfies
\begin{equation*}
  P_{k,n} \leq \left(1-\frac{m}{n}\right)^{n} \sum_{i=0}^{k-1} \dfrac{1}{i!}
  \left( \dfrac{m}{1- \frac{m}{n}} \right)^i .
\end{equation*}
By (\ref{eq:defyc}) we have that $n > 2m$, and so $i!\left(1 - m/n \right)^i > 1/2$. Also for $m > 2$
and $k > 1$, we have that $\sum_{i=0}^{k-1}\,m^i\leq m^k$. Thus
\begin{equation*}
  P_{k,n} \leq 2\left(1-\frac{m}{n}\right)^n m^k ,
\end{equation*}
and
\begin{equation*}
  \ln P_{k,n} \leq \ln 2 + n\ln\left(1-\frac{m}{n} \right) + k\ln m .
\end{equation*}
Using $\ln(1-x) < -x$ for $x \in (0,1)$, and (\ref{eq:defyb}):
\begin{equation*}
  \ln P_{k,n}\leq \ln 2 - \epsilon \ln m .
\end{equation*}
From (\ref{eq:defya}) we have $\ln P_{k,n} \rightarrow -\infty$.
\end{proof}

\begin{theorem}
Theorem \ref{theo:theo1} implies that the exogenous system of Definition \ref{defn:exo-endo} has a unique solution $R_n^*= \Gamma^*(N;n)$ for each $n$. If $\alpha_i(n)$ is a $(p,q)$-sequence, these solutions satisfy:
\begin{enumerate}
  \item $N(x) < \dfrac pq\quad \Rightarrow \quad \displaystyle\lim_{n\rightarrow \infty} R^*_n (x) = 0$
  \item $N(x) > \dfrac pq\quad \Rightarrow \quad \displaystyle\lim_{n\rightarrow \infty} R^*_n (x) = \dfrac{q N(x) - p}{q - p}$
\end{enumerate}
\label{theo:theo2}
\end{theorem}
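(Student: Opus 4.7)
The plan is to start from the fixed-point characterization of Lemma \ref{lem:fixedpoints}, namely $\sum_{i=1}^n \alpha_i\,\Theta_{i:n}(R_n^*) = q\,N(x)$, and to split the sum at the cutoff $k(n)$ into a ``head'' ($i\leq k(n)$) and a ``tail'' ($i>k(n)$). For the tail I would write $\alpha_i = \tilde\alpha + \delta_i$ and invoke Lemma \ref{lem:order-stats} to get $\sum_{i>k(n)}\Theta_{i:n}(R) = nR - \sum_{i=1}^{k(n)}\Theta_{i:n}(R)$. Using $\max_{i>k(n)}|\delta_i| = o(1/n)$ and $k(n)\tilde\alpha = O(k(n)/n) = o(1)$, the contributions of the perturbations $\delta_i$ and of $\tilde\alpha \sum_{i=1}^{k(n)}\Theta_{i:n}$ are both $o(1)$. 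The fixed-point equation then reduces to
\[
q\,N(x) \;=\; \sum_{i=1}^{k(n)} \alpha_i\,\Theta_{i:n}(R_n^*) \;+\; \tilde\alpha\,n\,R_n^* \;+\; o(1),
\]
with $\tilde\alpha\,n \to q-p$ and $\sum_{i=1}^{k(n)}\alpha_i = \tilde p \to p$ (Definition \ref{def:pandq} sandwiches $p-\alpha_{k(n)+1}\leq \tilde p\leq p$ and $\alpha_{k(n)+1}=O(1/n)$).

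Next I would argue by subsequence compactness. Since $R_n^*\in[0,1]$, any subsequence admits a convergent sub-subsequence with some limit $R^\infty\in[0,1]$. In Case A ($R^\infty>0$), we have $R_n^* > m(n)/n$ eventually along the sub-subsequence, since $m(n)/n\to 0$ by (\ref{eq:defyc}); monotonicity of $\Theta_{i:n}$ in $R$ (Lemma \ref{lem:derivatives}) together with the identity $1-\Theta_{i:n}(m(n)/n) = P_{i,n}(m(n))$ then lets Lemma \ref{lem:technical} deliver $\Theta_{i:n}(R_n^*)\to 1$ uniformly in $1\leq i\leq k(n)$. Hence the head sum tends to $p$, the tail to $(q-p)R^\infty$, and the displayed equation forces
\[
R^\infty \;=\; \frac{q\,N(x) - p}{q-p},
\]
which is consistent with $R^\infty>0$ exactly when $N(x)>p/q$. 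In Case B ($R^\infty=0$), the tail $\tilde\alpha\,n\,R_n^*\to 0$ and the head sum is bounded above by $\tilde p\to p$, yielding $q\,N(x)\leq p$, i.e.\ $N(x)\leq p/q$.

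Combining the two cases: if $N(x)<p/q$ only Case B is consistent, so every convergent sub-subsequence has limit $0$ and therefore $R_n^*\to 0$; if $N(x)>p/q$ only Case A is consistent, so every convergent sub-subsequence has limit $(qN(x)-p)/(q-p)$ and $R_n^*$ converges to this value. The principal obstacle in implementing this plan is the uniform statement $\Theta_{i:n}(R_n^*)\to 1$ over the growing window $1\leq i\leq k(n)$: without it the head sum's limit cannot be pinned down at $p$. This is exactly the purpose of Lemma \ref{lem:technical}, and is where the sharp hypothesis $k(n) = o(n/\ln n)$ is exploited. Everything else is careful bookkeeping of the $o(1)$ error terms coming from the $\delta_i$ perturbations and the boundary overlap $\tilde\alpha\sum_{i=1}^{k(n)}\Theta_{i:n}$.
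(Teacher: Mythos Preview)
Your proposal is correct and follows essentially the same route as the paper: the identical decomposition of the fixed-point equation at the cutoff $k(n)$, the same $o(1)$ bookkeeping for the $\delta_i$ and $\tilde\alpha\sum_{i\le k}\Theta_{i:n}$ terms, and the same appeal to Lemma~\ref{lem:technical} to force the head sum to $p$.

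The only organizational difference is how the two limits are extracted. The paper exploits the monotonicity of $\Gamma^*(\,\cdot\,;n)$ in $N$: it chooses $y_n$ so that $R_n^*(y_n)=m(n)/n$, shows $N(y_n)\to p/q$, and then monotonicity transfers this to all $x$ with $N(x)<p/q$. You instead fix $x$ and run a sub-subsequence/compactness dichotomy on $R_n^*(x)\in[0,1]$. Your route avoids having to produce such level-set points $y_n$ (which relies on continuity and surjectivity of $\Gamma^*$), at the cost of a case split; the paper's route is slightly more constructive but leans on Theorem~\ref{theo:theo1}'s regularity statement. Both arguments hinge on exactly the same estimate, namely $\max_{j\le k(n)}P_{j,n}(m)\to 0$, which is where the hypothesis $k(n)=o(n/\ln n)$ enters.
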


\begin{proof}
$R_n^*(y)=\Gamma^*(N(y);n)$ are non-decreasing functions of $N$ by Theorem \ref{theo:theo1}.
We prove part 1 by finding values $y_n$ tending to $p/q$ such that $R_n^*(y_n)$ tends to 0.

Using $\alpha_i$, $k(n)$ and $q$ given in Definition 2, and $m(n)$ defined in (\ref{eq:defm}), we can write the fixed-point equation from Lemma \ref{lem:fixedpoints} as:
\begin{equation}
\label{eq:eq1}
  qN = \sum_{i=1}^{k}\,\alpha_i \Theta_{i:n} - \tilde \alpha \sum_{i=1}^{k}\,\Theta_{i:n} + \tilde \alpha \sum_{i=1}^n\, \Theta_{i:n} + \sum_{i=k+1}^n\, \delta_i \Theta_{i:n},
\end{equation}
where we have suppressed the dependence of $\alpha_i$ and $\tilde \alpha$ on $n$. Let $y_n \in [0,1]$ be such that $R^*_n \equiv R^*(y_n) = m(n)/n$, and use Lemma \ref{lem:order-stats}, the notation introduced in (\ref{eq:kovern}), and Lemma \ref{lem:technical} to obtain:
\begin{equation*}
  qN_n = \sum_{i=1}^{k}\,\alpha_i(1-P_{i,n}) -\frac{q-p}{n-k} \sum_{i=1}^{k}\,\Theta_{i:n}(R^*_n) + n\frac{q-p}{n-k} R_n^* + \sum_{i=k+1}^n\, \delta_i \Theta_{i:n}(R^*_n) ,
\end{equation*}
where $N_n \equiv N(y_n)$.

From (\ref{eq:kovern}), (\ref{eq:defyc}), and parts 2 and 3 of Definition 2 we have for $n \rightarrow \infty$,
\begin{eqnarray}
\label{eq:eq2}
   0 & \leq & \frac{q-p}{n-k} \sum_{i=1}^{k}\,\Theta_{i:n}(R^*_n) \leq \frac{q-p}{n-k} k \rightarrow 0 , \nonumber \\
   0 & \leq & n \frac{q-p}{n-k} R_n^* \leq n \frac{q-p}{n-k} \frac{m}{n} \rightarrow 0 , \\
  0 & \leq & \sum_{i=k+1}^n\, \delta_i \Theta_{i:n} (R^*_n) \leq (n-k-1) \max_{i>k} | \delta_i | \rightarrow 0 \nonumber .
\end{eqnarray}
Furthermore, we note that by Lemma \ref{lem:technical}
\[
   0 \leq \sum_{i=1}^{k(n)} \alpha_i P_{i,n} \leq \max_{i\leq k(n)} P_{i,n} \sum_{i=1}^{k(n)} \alpha_i \leq p \max_{i\leq k(n)} P_{i,n} \rightarrow 0 .
\]
Thus the first term tends to $p$ and we obtain
\begin{equation*}
  \lim_{n\rightarrow\infty}\,qN_n = p .
\end{equation*}
which proves the first part.

We prove part 2 by establishing that if $y$ is such that $R_n^*(y) = \Gamma^*(N (y);n) > \epsilon$ (independent of $n$), then $N(y) > p/q$ and $R_n^*(y)$ must have the indicated limit.

In the proof of part 1 we showed that whenever $R_n^*(y)=m(n)/n$, then $\lim_{n\rightarrow\infty}\,\sum_{i=1}^{k(n)}\,\alpha_i \Theta_{i:n}(R_n^*(y)) = p$. Since the $\Theta_{i:n} (R_n^*)$ are non-decreasing functions of $R_n^*$ (see Lemma \ref{lem:derivatives}), for $R_n^*(y) \geq\epsilon>0$ we must have that
\begin{equation*}
   \lim_{n\rightarrow\infty}\,\sum_{i=1}^{k(n)}\,\alpha_i \Theta_{i:n} (R_n^* (y)) \geq p .
\end{equation*}
On the other hand, from Theorem \ref{theo:order-statistics} and for any value $y$,
\[
  \sum_{i=1}^{k(n)}\,\alpha_i \Theta_{i:n} (R_n^*(y)) = \sum_{i=1}^{k(n)}\,\alpha_i - \sum_{i=1}^{k(n)}\,\sum_{j=0}^{i-1}\,{n \choose j} (R_n^*(y))^j (1- R_n^*(y))^{n-j} \leq p .
\]
Thus,
\[
  \lim_{n\rightarrow \infty} \sum_{i=1}^{k(n)} \alpha_i \Theta_{i:n} (R_n^*(y)) = p .
\]

Combining these bounds and using the arguments in (\ref{eq:eq2}) to have the second and fourth terms of (\ref{eq:eq1}) again limiting to 0, we get for any $\epsilon > 0$ and $R_n^*(y_n) \geq \epsilon$, letting $N_n \equiv N(y_n)$,
\begin{equation*}
  \lim_{n\rightarrow\infty} \left( qN_n - p - \frac{q-p}{1-k(n)/n} \Gamma^*(N_n;n) \right) = 0 ,
\end{equation*}
which proves part 2 of the theorem.
\end{proof}

This result is similar in spirit to Theorem 3.2 in \cite{GKW2}, although simpler in its structure. Also, it applies to any driving distribution $N$, and not just to the uniform case. It provides a very close approximation to the simulation results presented in \cite{FSB}, Fig.\ 1, and specially in \cite{GD}, Fig.\ 1, for the form of the stationary marginal densities in the Bak-Sneppen model.

The result can be extended to include other situations. For instance in Definition 2 instead of using a single number $p$ one could require
\begin{equation*}
  p_1=\sum_{i=1}^{k_1(n)}\,\alpha_i\quad \mbox{and}\quad p_2=\sum_{i=k_2(n)}^{n}\,\alpha_i .
\end{equation*}
The example of Figure \ref{fig7} is a case in point.

\section{Comparison of rank-driven models}
\label{chap:discussion}

We compare the performance of our approximating rank-driven dynamical model to a rank-driven model that we can analyze exactly. For the remainder of the section we consider the exogenous process with $n$ agents, where $\alpha_1=1$ and the remaining $\alpha_i$'s are all zero. The solution (without approximation) is described in the following proposition (see \cite{GKW2} for more details).

\begin{proposition}
For the exact exogenous system with $n$ agents, $\alpha_1=1$, and the remaining $\alpha_i$'s equal to zero, its distribution $\hat R(x)$ converges to $\hat{\Gamma}^* (N)(x)$ and:
\begin{equation*}
  \hat{\Gamma}^* (N) = \dfrac1n N + \dfrac{n-1}{n}\;H_\beta ,
\end{equation*}
where $N$ is the driving distribution, $H_\beta$ the Heaviside function with jump at $\beta$, and $\beta=\sup\{x\,|\,N(x)<1\}$.
\label{prop:exact}
\end{proposition}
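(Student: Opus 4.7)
The plan is to analyze the order statistics $\hat G_{(1)} \leq \cdots \leq \hat G_{(n)}$ directly, without invoking the independence approximation used elsewhere in the paper, exploiting the fact that at each step only $\hat G_{(1)}$ is replaced. First I would establish a monotonicity observation: if $\hat G_{(1)}$ is removed and a fresh $Y \sim N$ is inserted, the new $j$-th order statistic satisfies $\hat G'_{(j)} \geq \hat G_{(j)}$ for every $j \geq 2$, because the $n-j+1$ values $\hat G_{(j)}, \hat G_{(j+1)}, \ldots, \hat G_{(n)}$ all survive. Thus each $\hat G_{(j)}$ with $j\geq 2$ is a non-decreasing, bounded process and converges almost surely to some limit $L_j$. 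Without loss of generality one can take the initial values in $[0,\beta]$ (a coupling/dominance argument handles mass above $\beta$, which is frozen in place but irrelevant in the limit we write down).

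Next I would show $L_j = \beta$ for $j\geq 2$ by examining, for each fixed $t<\beta$, the counter $K_t \equiv \#\{i : \hat G_{(i)} > t\}$. A short case analysis of one update gives: $K_t$ strictly increases by $1$ whenever $Y > t$ and $K_t \leq n-1$, and otherwise $K_t$ is non-decreasing until it first hits $n-1$. Since $1-N(t)>0$, the chain reaches $\{n-1,n\}$ in finite expected time and then behaves as a two-state Markov chain with stationary distribution $\pi_{n-1}=N(t),\ \pi_n = 1-N(t)$. In particular $\hat G_{(2)} > t$ eventually a.s., so $L_2 \geq \beta$; combined with $L_j\leq\beta$, this gives $L_j=\beta$ for all $j\geq 2$.

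Once $\hat G_{(2)}, \ldots, \hat G_{(n)} \to \beta$ almost surely, the update rule shows that the new minimum after an update is $\min(Y,\hat G_{(2)}) = Y$ in the limit (since $Y\leq \beta$ a.s.). Hence $\hat G_{(1)}$ is asymptotically distributed as $N$. The marginal of a uniformly chosen agent is then the $1/n,(n-1)/n$ mixture
\[
\hat\Gamma^*(N) \;=\; \tfrac{1}{n}\,N \;+\; \tfrac{n-1}{n}\,H_\beta,
\]
as claimed.

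The main obstacle is to glue the pointwise-in-$t$ convergence of $K_t$ to a statement about the marginal cdf $\hat R(x)$ at every $x$. I would do this by choosing a countable sequence $t_k \uparrow \beta$, using the monotonicity of $K_t$ in $t$ and of $\hat G_{(j)}$ in time to get joint a.s.\ convergence $\hat G_{(j)}\to\beta$ for $j\geq 2$, and then combining with the fact that $\hat G_{(1)}$ is, in stationarity, exactly the last drawn $Y$ to compute the marginal via conditioning on the (uniform) index of the chosen agent. A secondary technical point is the treatment of an atom of $N$ at $\beta$: the two-state stationary analysis still delivers $K_t = n-1$ eventually for $t\geq\beta$, which is consistent with the formula, so no separate argument is needed beyond care with one-sided limits.
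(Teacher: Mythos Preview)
Your argument is correct and follows essentially the same route as the paper: both proofs use the monotonicity of $\hat G_{(j)}$ for $j\geq 2$ to deduce almost-sure convergence to $\beta$, then conclude that the minimum is asymptotically distributed as $N$, and finally average over the order statistics to obtain the stated mixture. The only difference is that you introduce the counter $K_t$ and its two-state absorbing behavior to prove $L_j=\beta$, whereas the paper argues this step more tersely by contradiction (if the limit were below $\beta$, a replacement would eventually exceed it); your version is a mild elaboration of the same idea rather than a different approach.
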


\begin{proof} Let $\{x^{(i)}_t\}_{i=1}^{n}$ be the values of the fitnesses at time-step $t$, ordered according to increasing magnitude. At every positive integer $t$,  $x^{(1)}_t$ is replaced by a $N$-random value $x$. If $x<x^{(2)}_{t-1}$ then
\begin{equation*}
  x^{(1)}_t=x \quad \mbox{and} \quad x^{(i)}_t = x^{(i)}_{t-1} \quad {\rm for \;} i > 1 .
\end{equation*}
If that is not the case, then for some $k>1$ we have
\begin{equation*}
  x^{(k)}_t = x , \quad
  x^{(i)}_t = x^{(i+1)}_{t-1} \quad {\rm if \;} i<k \quad \mbox{and} \quad x^{(i)}_t = x^{(i)}_{t-1} \quad {\rm for \;} i > k .
\end{equation*}
It follows that for fixed $i>1$ the sequence $x^{(i)}_t$ (as function of $t$) is non-decreasing and bounded, so it has a limit. If it converges to a number less than $\beta$, where $N(\beta) < 1$, then with probability 1 the replacement value $x$ will at some point be larger than it. And then it must increase by the previous reasoning. Thus, must converge to a value $\geq\beta$.

In the limit the replacement value will always be $x^{(1)}_t$, the smallest entry in the list of fitnesses. Therefore the distribution of that variable must be equal to $N$. Thus, the order-statistics satisfy
\begin{equation*}
  \Theta_{1:n}(\hat R^*) = N \quad \mbox{ and } \quad
  \Theta_{i:n} (\hat R^*) = H_\alpha(x) \quad \mbox{for } i > 1.
\end{equation*}
The proposition follows from Lemma \ref{lem:order-stats}.
\end{proof}

If we introduce the independence assumption and approximate this model with a rank-driven dynamical system, then from Equation \ref{eq:explicit} we see that its limiting measure satisfies:
\begin{equation}
  \Gamma^*(N) = 1-(1-N)^{1/n}
\label{eq:limiting measure}
\end{equation}
We now provide a sense of the closeness of the distributions $R^*=\Gamma^*(N)$ and $\hat R^* = \hat\Gamma^* (N)$ by computing several derived quantities. From Proposition \ref{prop:exact} and Theorem \ref{theo:order-statistics} one easily sees that:

\begin{lemma}
\begin{equation*}
   \Theta_{1:n} (R^*(x)) = \Theta_{1:n} (\hat R^*(x)) = N(x)
\end{equation*}
\label{lem:means}
\end{lemma}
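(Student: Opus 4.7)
The plan is to split the claimed double equality into its two halves and handle them by completely different routes, one computational and one by appeal to what was already shown inside Proposition \ref{prop:exact}.

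For the first equality, $\Theta_{1:n}(R^*) = N$, I would substitute the closed form $R^* = \Gamma^*(N) = 1 - (1-N)^{1/n}$ from Equation (\ref{eq:limiting measure}) directly into the $k=1$ case of the formula in Theorem \ref{theo:order-statistics}, which under our independence assumption reads $\Theta_{1:n}(R) = 1 - (1-R)^n$. Since $1 - R^* = (1-N)^{1/n}$ by construction, raising to the $n$-th power collapses the expression to $1 - (1-N) = N$. This is a one-line calculation; no estimate or limit is involved.

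For the second equality, $\Theta_{1:n}(\hat R^*) = N$, the key point is that one must \emph{not} try to evaluate $\Theta_{1:n}(\hat R^*)$ via the polynomial formula of Theorem \ref{theo:order-statistics}, because $\hat R^*$ is the stationary distribution of a genuinely non-independent process and that formula would not give the true distribution of the minimum there. Instead I would simply invoke the argument given in the proof of Proposition \ref{prop:exact}: in the stationary regime the element that is replaced at each step is always the sample minimum, and its replacement is $N$-distributed, so the marginal of the minimum order statistic must equal $N$. That is precisely the statement $\Theta_{1:n}(\hat R^*) = N$, and it is already established in that earlier proof; no further work is needed.

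The only subtle point, and the one place a careless reader could be misled, is the dual meaning of the symbol $\Theta_{1:n}$: for $R^*$ it is computed under independence via the explicit polynomial, while for $\hat R^*$ it denotes the true marginal of the minimum in the exact (dependent) process. A short sentence making this distinction explicit before presenting the two short arguments should suffice to make the lemma transparent.
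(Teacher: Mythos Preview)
Your proposal is correct and matches the paper's approach exactly: the paper's entire ``proof'' is the single sentence preceding the lemma, namely that the result follows from Proposition~\ref{prop:exact} and Theorem~\ref{theo:order-statistics}, which is precisely your split --- the polynomial formula $\Theta_{1:n}(R)=1-(1-R)^n$ applied to $R^*=1-(1-N)^{1/n}$ for the first equality, and the explicit identification $\Theta_{1:n}(\hat R^*)=N$ already derived inside the proof of Proposition~\ref{prop:exact} for the second. Your remark about the dual meaning of $\Theta_{1:n}$ (independent-sample polynomial versus true order-statistic marginal of the exact process) is a genuine clarification that the paper leaves implicit; it is worth keeping.
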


To simplify the following discussion we now set $N$ to be equal to the uniform distribution: $N(x)=x$.

\begin{corollary}
The mean of $\hat R^*(x)$ is $1-\dfrac{1}{2n}$. The mean of $R^*(x)$ is $1-\dfrac{1}{n+1}$.
\label{cory:means2}
\end{corollary}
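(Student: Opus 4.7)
The plan is to compute each mean using the standard tail formula $\mathbb{E}[X] = \int_0^1 (1 - F(x))\,dx$ valid for any CDF $F$ supported in $[0,1]$. This avoids having to split into an absolutely continuous piece and a point-mass piece for $\hat R^*$, which would otherwise force a Riemann--Stieltjes computation.

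First, for $\hat R^*$: from Proposition \ref{prop:exact} with $N(x)=x$ we have $\beta=\sup\{x:x<1\}=1$, and hence
\[
 \hat R^*(x)=\tfrac{x}{n}+\tfrac{n-1}{n}H_1(x).
\]
On $[0,1)$ this gives $1-\hat R^*(x)=1-\tfrac{x}{n}$, and integrating over $[0,1]$ yields $1-\tfrac{1}{2n}$.

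Second, for $R^*$: equation (\ref{eq:limiting measure}) with $N(x)=x$ gives $R^*(x)=1-(1-x)^{1/n}$, so $1-R^*(x)=(1-x)^{1/n}$. The substitution $u=1-x$ reduces the tail integral to $\int_0^1 u^{1/n}\,du=\tfrac{n}{n+1}$, which is $1-\tfrac{1}{n+1}$.

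There is no real obstacle here; the only thing to verify is the evaluation $\beta=1$ for the uniform driver (so that the point-mass in $\hat R^*$ sits at the right endpoint of the support and contributes correctly to the tail integral) and the validity of the tail formula for a CDF with a jump at the upper endpoint (standard, since $1-F$ is integrable and nonnegative on $[0,1]$). The rest is a one-line integral in each case.
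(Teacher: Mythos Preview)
Your proof is correct and follows essentially the same approach as the paper: both use the tail formula $\mathbb{E}[X]=\int_0^1(1-F(x))\,dx$ and the explicit forms of $\hat R^*$ and $R^*$ from Proposition~\ref{prop:exact} and equation~(\ref{eq:limiting measure}). The only difference is that the paper merely asserts the first mean ``follows from Proposition~\ref{prop:exact}'' without writing out the integral, whereas you carry it through explicitly; for the second mean the computations are identical.
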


\begin{proof} The first statement follows from Proposition \ref{prop:exact}. The second can be calculated using Equation \ref{eq:limiting measure} and
\begin{equation*}
  \mathbb{E} [ R^*] = \int_0^1 (1 - R^*(x) )\,dx = \int_0^1\,(1-x)^{1/n}\,dx = \dfrac{n}{n+1}
\end{equation*}
\end{proof}

The situation is slightly worse if we look at the distributions of individual order statistics. In this context, we would expect to see the largest difference between $\Theta_{2:n} (\hat R^*)$ and $\Theta_{2:n} (R^*)$.

\begin{corollary}
$\Theta_{2:n} (\hat R^*) = H_1$ and has mean 1, while $\Theta_{2:n} (R^*) (x) = 1+(n-1)(1-x)-n(1-x)^{(n-1)/n}$ and has mean $\dfrac{3n-1}{2(2n-1)}$.
\label{cory:means3}
\end{corollary}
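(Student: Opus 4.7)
The plan is to treat the two claims separately, since they rely on different inputs from the preceding material.

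For $\Theta_{2:n}(\hat R^*)$, I would invoke directly the computation already performed inside the proof of Proposition~\ref{prop:exact}: there it is shown that for the exact exogenous dynamics every order statistic above the minimum concentrates on $\beta = \sup\{x : N(x) < 1\}$, giving $\Theta_{i:n}(\hat R^*) = H_\beta$ for $i > 1$. Specializing to the uniform driving distribution $N(x) = x$ yields $\beta = 1$, hence $\Theta_{2:n}(\hat R^*) = H_1$, which is the Dirac mass at $1$ and therefore has mean $1$. This first half is essentially a matter of reading off the right value from the earlier proof.

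For $\Theta_{2:n}(R^*)$, I would take the closed-form fixed point $R^*(x) = 1 - (1-x)^{1/n}$ from Equation~(\ref{eq:limiting measure}) and substitute it into the $k=2$ case of the order-statistics formula in Theorem~\ref{theo:order-statistics}, namely
\[
  \Theta_{2:n}(R) = 1 - (1-R)^n - n R (1-R)^{n-1}.
\]
Using $(1-R^*)^n = 1-x$ and $(1-R^*)^{n-1} = (1-x)^{(n-1)/n}$, the middle and last terms combine cleanly to produce
\[
  \Theta_{2:n}(R^*)(x) = 1 + (n-1)(1-x) - n(1-x)^{(n-1)/n},
\]
which is the claimed expression. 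The computation is routine algebra, with the only delicate point being to group the two $(1-x)$-powers so that the polynomial part collapses to $(n-1)(1-x)$ rather than being left as $n(1-x) - (1-x)$.

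For the mean I would use the standard identity $\mathbb{E}[X] = \int_0^1 (1 - F(x))\,dx$ for a cdf $F$ supported on $[0,1]$, as was already done in Corollary~\ref{cory:means2}. Applied here it yields
\[
  \int_0^1 \bigl( n(1-x)^{(n-1)/n} - (n-1)(1-x) \bigr)\,dx
   = \frac{n^2}{2n-1} - \frac{n-1}{2},
\]
and the common denominator simplification gives $(3n-1)/(2(2n-1))$, as stated.

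There is no real obstacle; the corollary amounts to a direct plug-in. The only thing to watch is the symbol $H_\alpha$ appearing in the proof of Proposition~\ref{prop:exact}, which must be read as $H_\beta$ with $\beta = 1$ for the uniform $N$, so that the two halves of the corollary are genuinely being compared on the same driving distribution.
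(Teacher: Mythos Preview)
Your proposal is correct and follows essentially the same route as the paper: the first claim is read off from the order-statistics computation in Proposition~\ref{prop:exact} (which the paper simply calls ``obvious''), and the second is obtained by substituting $R^* = 1-(1-x)^{1/n}$ into the $k=2$ formula from Theorem~\ref{theo:order-statistics} and then computing the mean via $\int_0^1 (1-F)$. The only difference is that you spell out the algebra and the identification $\beta=1$ for uniform $N$ a bit more explicitly than the paper does.
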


\begin{proof} The first statement is obvious.

For the second statement, by Theorem \ref{theo:order-statistics} we get:
\begin{equation*}
  \Theta_{2:n} (R^*) = 1-(1-R^*)^n-nR^*(1-R^*)^{n-1}
\end{equation*}
Substituting Equation \ref{eq:limiting measure} gives the desired expression for $\Theta_{2:n} (R^*)$. For its mean, we have as before
\begin{eqnarray*}
  \mathbb{E} [ \Theta_{2:n} (R^*) ] & = & \int_0^1 (1 - \Theta_{2:n} (R^*)(x) )\,dx \\
  & = & n \int_0^1 (1-x)^{(n-1)/n} \,dx - (n-1) \int_0^1 (1-x) \,dx
\end{eqnarray*}
The evaluation of this integral yields the desired result.
\end{proof}

\section{Conclusions}
\label{chap:conclusions}

In this paper we have described and analyzed the asymptotic behavior of a class of
rank-driven processes. These processes appear in many practical cases, where a dynamic
system changes along time by replacing some of its components by others with new random characteristics, when the choice of the components to replace based on the ranks of these components. This would be a reasonable model for a system where the replacements come from a fixed distribution $N$ that is independent of the evolving distribution. This model was thought to apply to the ``biological evolution of interacting species'' by its original authors (see \cite{BS} for further discussion of this aspect). Other examples are for example the modeling of the dynamics of economic agents, see \cite{BLT} or \cite{ACP}, or the study of small-world networks, \cite{E}. As long as the components to be replaced are selected in an (approximately) independent manner, our theory shows that there is an essentially unique fixed point distribution which can easily be constructed from the initial distribution and the fixed distribution $N$.

In the paper we also show that the behavior of these systems is quite different if the replacements come from the evolving distribution itself. That would be the case if the characteristics of the replacements are somehow inherited from those of the existing population. In this case we have show that again there is a unique fixed point that can easily be constructed from the initial distribution. The difference with the previous case is that in this case the limiting distribution is always a countably singular measure, while in the former case there are many plausible scenarios where the limiting measure actually has a non-singular density.

\begin{acknowledgements}
The authors wish to thank Ra\'{u}l Jimenez for inspiring conversations, as well as the anonymous referees for their many suggestions to improve the presentation and the contents of the paper.

The work of one of the authors (F.J. Prieto) was partially supported by the Spanish Ministerio de Educacion, Cultura y Deporte grant MCYT ECO2011-25706.
\end{acknowledgements}

\end{document}